\newtheorem{theorem}{Theorem}
\newtheorem{proposition}{Proposition}
\newtheorem{lemma}{Lemma}
\theoremstyle{remark}
\newtheorem{remark}{Remark}
\theoremstyle{definition}
\newtheorem*{notation}{Notation}
\newcommand{\chr}{\mathrm{char}}
\title[Powerfree Polynomials in Short Intervals]{Short Interval Results For Powerfree Polynomials Over Finite Fields}
\author[A. Kumchev]{Angel Kumchev}
\address{Department of Mathematics\\ Towson University\\ Towson, MD 21252\\ U.S.A.}
\email{akumchev@towson.edu}
\author[N. McNew]{Nathan McNew}
\address{Department of Mathematics\\ Towson University\\ Towson, MD 21252\\ U.S.A.}
\email{nmcnew@towson.edu}
\author[A. Park]{Ariana Park}
\address{School of Mathematics\\ University of Minnesota Twin Cities\\ Minneapolis, MN 55455\\ U.S.A.}
\email{park2968@umn.edu}
\begin{document}

\maketitle

\begin{abstract}
Let $k \geq 2$ be an integer and $\mathbb F_q$ be a finite field with $q$ elements. We prove several results on the distribution in short intervals of polynomials in $\mathbb F_q[x]$ that are not divisible by the $k$th power of any non-constant polynomial. Our main result generalizes a recent theorem by Carmon and Entin \cite{CaEn21} on the distribution of squarefree polynomials to all $k \ge 2$. We also develop polynomial versions of the classical techniques used to study gaps between $k$-free integers in $\mathbb Z$. We apply these techniques to obtain analogues in $\mathbb F_q[x]$ of some classical theorems on the distribution of $k$-free integers. The latter results complement the main theorem in the case when the degrees of the polynomials are of moderate size.    
\end{abstract}

\section{Introduction}

Recall that if $k \ge 2$ is a fixed integer, an integer $n$ is called $k$-free if $n$ is not divisible by the $k$th power of any prime. This is a generalization of the classical concept of a squarefree integer, which occurs in the special case when $k=2$. Much work has been done studying the distribution of $k$-free integers in short intervals, especially in the squarefree case: see \cites{fogel41, erdos51, Erdo66,  Fila88, Fila90, FGT15, FT89, FT92, FT96,  gk88, Gran98, Halb83, HR51, KMMPSZ1, Rank55, richert54, Roth51, Schm64, tr89, Tr95}. In particular, Filaseta and Trifonov~\cite{FT92} proved that there exists a constant $c > 0$ such that the interval $(x,x+cx^{1/5}\ln x]$ contains a squarefree integer for all sufficiently large $x$. Trifonov~\cite{Tr95} further generalized this result to $k$-free integers for all $k \ge 2$. He showed that for some constant $c = c(k) > 0$, the interval $(x,x+cx^{1/(2k+1)}\ln x]$ contains a $k$-free integer when $x$ is sufficiently large. To the best of our knowledge, these are the sharpest unconditional upper bounds on the maximum gap between consecutive $k$-free numbers. Conditionally on the $abc$-conjecture, Granville~\cite{Gran98} has shown that for any fixed $\varepsilon > 0$, the interval $(x, x + x^{\varepsilon}]$ contains squarefree integers for sufficiently large $x$. 

There are many parallels between the arithmetic of $\mathbb Z$ and that of $\mathbb F_q[x]$, the ring of polynomials in $x$ over a finite field $\mathbb F_q$ with $q$ elements (see \cites{LN97, Rosen02} for background on such research). In particular, a polynomial in $\mathbb F_q[x]$ is called {\em $k$-free} if it has no irreducible factors of multiplicity $k$ or higher; when $k=2$, we call such a polynomial {\em squarefree}. One may expect to find ample existing research on analogues for polynomials from $\mathbb F_q[x]$ of the aforementioned research on the gap problem for $k$-free integers, but that does not appear to be the case. Indeed, a search of the literature on the distribution in short intervals of $k$-free polynomials over a finite field yields very limited results, almost entirely focused on the squarefree case. 

Let $q = p^f$, with $p$ prime and $f \in \mathbb N$, be the cardinality of a finite field $\mathbb F_q$. Henceforth, we restrict $q$ to integers of this form. We let $\mathcal M_q$ denote the set of monic polynomials in $\mathbb F_q[x]$ and write $\mathcal M_q(d)$ for the subset of monic polynomials of degree $d$. When $F\in\mathcal M_q$ and $h < \deg F$, an interval in $\mathbb F_q[x]$ of length $h$ 
centered at $F$ is the set
\[\mathcal I_q (F,h) = \left\{ Q\in\mathbb F_q[x] : \deg(F-Q) \leq h \right\}. \]
In this paper, we study $k$-free polynomials in short intervals of this kind. To draw an analogy with short intervals $(x,x+h]$ in $\mathbb Z$, we observe that when $F \in \mathcal M_q(n)$, the ``size'' of the polynomials in $\mathcal I_q (F,h)$ is $q^n$, whereas the number of polynomials in the interval is $q^{h+1}$. In particular, the interval is ``short'' whenever $0 < h \le n-2$. Thus, a proper analogue of a short interval $(x,x+h]$, where $x \to \infty$ and $h = O(x^{\theta})$, $0 < \theta < 1$, is an interval $\mathcal I_q (F,h)$, where $q^n \to \infty$ and $h \le \theta n$. 

Note that the condition $q^n \to \infty$ above can occur in different ways. For example, one may fix $n = \deg(F)$ and let $q \to \infty$. In this regime, the question was studied by Keating and Rudnick \cite{KR16}. Drawing on earlier work by Rudnick \cite{Rudn14} on the density of squarefree polynomials over $\mathbb F_q$, they showed that for any integers $h,n$ with $0 < h \le n-2$, one can take $q$ sufficiently large so that there exists a squarefree polynomial in every interval $\mathcal{I}_q(F,h)$, with $F \in \mathcal M_q(n)$. The theorem of Keating and Rudnick does not quantify how fast $q$ must grow in terms of $n$, but an examination of their proof suggests that it can be made effective to show that such a conclusion holds as long as $q > c(n+h)$ for some constant $c$.

In this paper, we focus on the case when $q$ is fixed and $n \to \infty$. The behavior of powerfree polynomials in this regime turns out to be quite different, and the analogy with $\mathbb Z$ is more direct. For example, in the case of gaps between squarefree integers, Erd\H{o}s \cite{erdos51} proved long ago that the maximum gap is unbounded: there are arbitrarily large $x$ such that the interval $(x,x+h]$ contains no squarefree integers when 
\[ h \leq \frac {c\ln x}{\ln\ln x} \]
for any constant $c$ such that $2c < \zeta(2)$. In \S \ref{sec:erdosbound}, we establish a version of Erd\H{o}s' result for polynomials over $\mathbb F_q$. If $\zeta_q(s) = (1 - q^{1-s})^{-1}$ denotes the zeta-function of the ring $\mathbb F_q[x]$ (see \cite{Rosen02}*{Ch. 2}), our result can be stated as follows.

\begin{theorem}\label{prop:crt}
Let $k \geq 2$ and $q \geq 2$ be fixed integers, and suppose that $c$ is any constant with $kc < \zeta_q(k)$. If $n$ is sufficiently large, there exist monic polynomials $F$ of degree at most $n$ such that the interval $\mathcal{I}_q(F,h)$ contains no $k$-free polynomials for any length $h$ such that
\begin{equation}
q^{h+1} \leq \frac {cn}{\log_q n}. \label{eq:hbound}
\end{equation}
\end{theorem}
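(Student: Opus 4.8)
The plan is to adapt Erd\H{o}s's classical construction of long gaps between squarefree integers, using the Chinese Remainder Theorem in $\mathbb{F}_q[x]$ in place of the one in $\mathbb{Z}$. Since $\mathcal{I}_q(F,h')\subseteq\mathcal{I}_q(F,h)$ whenever $h'\le h$, it suffices to fix the largest $h=H(n)$ for which \eqref{eq:hbound} holds and produce a single monic $F$ with $\deg F\le n$ for which $\mathcal{I}_q(F,h)$ contains no $k$-free polynomial. Writing the members of the interval as $F+R$, with $R$ ranging over the set $\mathcal{R}$ of all polynomials of degree $\le h$ (so $|\mathcal{R}|=q^{h+1}=:N$), the goal becomes: attach to every $R\in\mathcal{R}$ a monic irreducible $P_R$ with $P_R^{\,k}\mid F+R$. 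We will prescribe $F$ by a system of congruences $F\equiv A_P\pmod{P^k}$ indexed by the monic irreducibles $P$ in a finite set $\mathcal{P}$; since distinct irreducibles give coprime moduli, the CRT supplies a least-degree solution $F_0$ with $\deg F_0<\sum_{P\in\mathcal{P}}k\deg P$, and then $F:=F_0+\prod_{P\in\mathcal{P}}P^k$ is a monic solution of degree exactly $\sum_{P\in\mathcal{P}}k\deg P$. Thus everything reduces to choosing $\mathcal{P}$, and the residues $A_P$, so that this degree is at most $n$.

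We build $\mathcal{P}$ in two stages. In the \emph{bulk stage} we take every monic irreducible $P$ of degree at most $d_0=d_0(n)$, where $d_0\to\infty$ as slowly as we like (for instance $d_0=\lfloor\log_q\log_q n\rfloor$, so that $\sum_{\deg P\le d_0}k\deg P=O(\log_q n)=o(n)$), and we choose each class $A_P\bmod P^k$ uniformly at random and independently. For a fixed $R$, the probability that $P^k\nmid F+R$ for all these $P$ is $\prod_{\deg P\le d_0}(1-q^{-k\deg P})$, which tends to $\prod_{P}(1-q^{-k\deg P})=\zeta_q(k)^{-1}$ as $d_0\to\infty$; hence the expected number of $R\in\mathcal{R}$ not yet handled is $N\zeta_q(k)^{-1}(1+o(1))$. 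We fix an outcome of the random choices for which the set $\mathcal{U}\subseteq\mathcal{R}$ of unhandled $R$ has $M:=|\mathcal{U}|\le N\zeta_q(k)^{-1}(1+o(1))$. In the \emph{mop-up stage} we take the $M$ smallest monic irreducibles of degree exceeding $d_0$, assign them bijectively to $\mathcal{U}$ as $R\mapsto P_R$, and impose $F\equiv-R\pmod{P_R^{\,k}}$, which forces $P_R^{\,k}\mid F+R$. All moduli used in the two stages are pairwise coprime, so the CRT yields the required monic $F$; moreover a trivial counting bound shows the bulk classes cannot cover all of $\mathcal{R}$, so the mop-up stage is nonempty, whence $\deg F>h$ and $\mathcal{I}_q(F,h)$ is a genuine short interval.

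It remains to bound $\deg F=\sum_{P\in\mathcal{P}}k\deg P$. The bulk stage contributes $o(n)$. By the prime polynomial theorem the number of monic irreducibles of degree $\le D$ is $\gg q^D/D$, so the $M$ smallest monic irreducibles of degree $>d_0$ all have degree at most some $D$ with $D=\log_q M+O(\log_q\log_q M)$; since $M\le N\zeta_q(k)^{-1}(1+o(1))$ gives $\log_q M\le h+O(1)$, we get $D\le h+O(\log_q h)$. Hence the mop-up stage contributes at most $kMD\le \tfrac{k}{\zeta_q(k)}\,h\,q^{h+1}(1+o(1))$. Finally, the maximal $h$ satisfies $h=\log_q n-\log_q\log_q n+O(1)<\log_q n$, so by \eqref{eq:hbound} we have $h\,q^{h+1}<cn$, and therefore $\deg F\le\tfrac{kc}{\zeta_q(k)}\,n\,(1+o(1))+o(n)$, which is $<n$ once $n$ is large because $kc<\zeta_q(k)$. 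The step that has to be done with care is precisely this final chain: the factor $\log_q n$ in \eqref{eq:hbound} is exactly what is needed to absorb the ``average degree $\sim h\sim\log_q n$'' of the irreducibles consumed in the mop-up stage, and it is the interplay of that factor with the density $\zeta_q(k)^{-1}$ of polynomials left untouched by the bulk stage that pins down the threshold $kc<\zeta_q(k)$.
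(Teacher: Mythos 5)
Your proof is correct and follows essentially the same route as the paper's: a two-stage Chinese-Remainder covering in which the irreducibles of small degree absorb a $1-\zeta_q(k)^{-1}$ proportion of the interval, the remaining residues are covered one new irreducible apiece, and the degree of $F$ is controlled via the prime polynomial theorem together with the hypothesis $kc<\zeta_q(k)$. The only real difference is cosmetic: where you average over random choices of the bulk residue classes to find one leaving at most $q^{h+1}\zeta_q(k)^{-1}(1+o(1))$ residues uncovered, the paper computes that count exactly for every choice of classes by inclusion--exclusion (its Lemma~\ref{lem:exactsieve}).
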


In the squarefree case $k=2$, this is a direct analogue of Erd\H{o}s' result, as stated by Erd\H{o}s in \cite{Erdo66}. To the best of our knowledge, for $k > 2$, the corresponding result for integers has never been formally stated, though it has been known to researchers in the field and can be extracted from the remarks in \cite{Erdo66}. 

We include Theorem \ref{prop:crt} and its proof here, since it transpires that in the study of $k$-free polynomials over $\mathbb F_q$, the upper bounds on the least $h$ (as $n \to \infty$) for which $\mathcal I_q(F,h)$ must contain a $k$-free polynomial come much closer to the lower barrier imposed by Theorem \ref{prop:crt}. Recently, Carmon and Entin \cite{CaEn21} have shown that when
\begin{equation}\label{eq:carmon-cond}
q^{h+1}  > \left( \frac {g(n) n}{\log_q n} \right)^p, 
\end{equation} 
where $g(n) \to \infty$ as $n \to \infty$, one can obtain an asymptotic formula for the number of squarefree polynomials in the interval $\mathcal I_q(F,h)$. They derive this result as a special case of a theorem on the density of squarefree values of bivariate polynomials over $\mathbb F_q$. In particular, their proof is considerably more elaborate than is necessary for the application to the gap problem considered here. In the special case of interest, we developed a much simplified variant of their method, which we present in \S\ref{sec:carmon-basic}; it yields a rather quick proof that when $n \to \infty$ and \eqref{eq:carmon-cond} holds with $g(n)=1$, the interval $\mathcal I_q(F,h)$ contains (many) squarefree polynomials. Then, in \S\ref{sec:carmon}, we extend the method to $k$-free polynomials, for any $k \geq 2$, and establish the following result. 

\begin{theorem}\label{thm:main}
Let $k \ge 2$ and $q \ge 2$ be fixed integers, and suppose that $\chr(\mathbb F_q) = p$. Let $k = dp^a + \dots + d_1p + d_0$, $0 \le d = d_a, \dots, d_1, d_0 < p$, $d \ne 0$, be the base-$p$ representation of $k$. If $n$ is sufficiently large and $F \in \mathcal{M}_q(n)$, the interval $\mathcal{I}_q(F,h)$ contains a $k$-free polynomial whenever 
\begin{equation}\label{eq:main-cond}
q^{h+1}  > \left( \frac {n}{\log_q n} \right)^{1/\theta}, 
\end{equation} 
where $\theta = 1 - ({p - d +1}){p^{-a-1}}$. \end{theorem}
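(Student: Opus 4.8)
The plan is to estimate the number $N$ of $k$-free polynomials in $\mathcal I_q(F,h)$ by sieving, splitting the computation according to the degree of the offending irreducible divisor. I would begin from the Möbius identity
\[ N=\sum_{D\in\mathcal M_q}\mu(D)\,\#\{Q\in\mathcal I_q(F,h):D^k\mid Q\}, \]
and the observation that, writing $Q=F+G$ with $\deg G\le h$, the relation $D^k\mid Q$ is the linear congruence $G\equiv -F\pmod{D^k}$; hence $\#\{Q\in\mathcal I_q(F,h):D^k\mid Q\}=q^{\,h+1-k\deg D}$ as soon as $k\deg D\le h+1$, and is at most $1$ otherwise. Fixing a cutoff $r_0\asymp\log_q h$ small enough that $k\deg\bigl(\prod_{\deg P\le r_0}P\bigr)\le h+1$, the inclusion--exclusion over squarefree $D$ all of whose irreducible factors have degree $\le r_0$ is exact, and produces the main term
\[ M:=\#\{Q\in\mathcal I_q(F,h):P^k\nmid Q\text{ for all irreducible }P,\ \deg P\le r_0\}=q^{h+1}\prod_{\deg P\le r_0}\bigl(1-q^{-k\deg P}\bigr), \]
which satisfies $M\ge q^{h+1}\prod_{P}(1-q^{-k\deg P})=q^{h+1}\zeta_q(k)^{-1}=q^{h+1}(1-q^{1-k})\ge\frac12 q^{h+1}$.

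Any polynomial counted by $M$ that is not $k$-free must be divisible by $P^k$ for some irreducible $P$ with $r_0<\deg P\le n/k$, so $N\ge M-E_{\mathrm{med}}-E_{\mathrm{large}}$, where $E_{\mathrm{med}}$ and $E_{\mathrm{large}}$ count (with multiplicity) the pairs $(P,Q)$ with $Q\in\mathcal I_q(F,h)$, $P^k\mid Q$ and $r_0<\deg P\le h/k$, respectively $h/k<\deg P\le n/k$. For $E_{\mathrm{med}}$ the exact count $q^{\,h+1-k\deg P}$ still applies; summing it against the estimate $\ll q^r/r$ for the number of monic irreducibles of degree $r$ gives $E_{\mathrm{med}}\ll q^{h+1}q^{-r_0}/r_0=o(q^{h+1})$, which is harmless. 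For $E_{\mathrm{large}}$ the key point is a rigidity principle: if $k\deg P>h$, two polynomials in $\mathcal I_q(F,h)$ both divisible by $P^k$ would differ by a nonzero multiple of $P^k$ of degree $\le h<k\deg P$, which is impossible; hence each such $P$ contributes at most one polynomial, and $E_{\mathrm{large}}\le B$, where $B$ is the number of monic irreducible $P$ with $h/k<\deg P\le n/k$ admitting a multiple of $P^k$ in $\mathcal I_q(F,h)$. The theorem thus reduces to the estimate $B=o(q^{h+1})$ under \eqref{eq:main-cond}.

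Bounding $B$ is the heart of the matter, and it is here that $\theta$ --- hence the base-$p$ digits of $k$ --- enters, through the characteristic-$p$ refinement of the Carmon--Entin method developed in \S\ref{sec:carmon}. Given $P^k\mid F+G$ with $\deg G\le h$, I would push this relation through a carefully chosen $j$th Hasse derivative $D^{(j)}$ and then through repeated extraction of $p$th-power parts --- legitimate because $\mathbb F_q$ is perfect, so any polynomial lying in $\mathbb F_q[x^p]$ is the $p$th power of a polynomial over $\mathbb F_q$. A Hasse derivative $D^{(j)}$ carries the problem into an interval shorter by $j$ while replacing $P^k$ by $P^{k-j}$, whereas a $p$th-root step divides the interval length by $p$ while replacing the surviving power of $P$ by a lower power of the associated polynomial. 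Optimizing this chain against the leading digit $d=d_a$ and the size $a=\lfloor\log_p k\rfloor$ of $k$ transforms $P^k\mid F+G$ into a divisibility $\widetilde P^{\,e}\mid\widetilde F+\widetilde G$ with $e\ge 1$, with $\widetilde P$ still of large degree, and with $\widetilde G$ confined to an interval of length at most $\theta h+O(1)$; the exponent $1-\theta=(p-d+1)p^{-a-1}$ is exactly the cumulative fractional shortening that this chain produces. Rerunning the rigidity argument inside the shorter interval then bounds the number of admissible $\widetilde P$, and so $B$, by $q^{\theta h}$ up to a factor that is of lower order in $n$; by \eqref{eq:main-cond} this is $o\bigl((n/\log_q n)^{1/\theta}\bigr)=o(q^{h+1})$. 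The main obstacle is precisely this reduction: engineering the Hasse-derivative and $p$th-root steps so that they simultaneously preserve a divisor of large degree and realize the sharp contraction factor $\theta$ dictated by the $p$-adic arithmetic of $k$, rather than settling for the crude shortening by $O(1)$ that a single derivative alone provides.
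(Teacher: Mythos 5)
Your architecture is essentially the paper's: an exact sieve over small-degree irreducibles for the main term (your M\"obius/inclusion--exclusion main term is in fact a slight refinement the paper only records in a remark), the rigidity observation that $P^k$ with $k\deg P>h$ divides at most one element of the interval, and a derivative/$p$th-root chain governed by the base-$p$ digits of $k$ to count the admissible large-degree $P$. But the quantitative heart of the argument is both left unconstructed and mis-stated in a way that breaks the deduction. The chain removes a fraction $\theta$ of the ambient dimension and leaves a candidate set of size $q^{(1-\theta)(h+1)}$, where $1-\theta=(p-d+1)p^{-a-1}$ is the \emph{remaining} fraction, not the amount removed: already for $k=2<p$ the single derivative $G'$ lies in a coset of a space of dimension $h-\lfloor h/p\rfloor\approx(1-\tfrac1p)h=(1-\theta)h$. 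You assert the final interval has length $\theta h+O(1)$ and hence $B\lesssim q^{\theta h}\cdot(\text{lower order})$; with that exponent the conclusion would require $q^{h+1}\gtrsim(n/h)^{1/(1-\theta)}$, which is not condition \eqref{eq:main-cond} and is strictly stronger than it whenever $\theta>1/2$ (e.g.\ for all $k\ge p$). The correct count is $B\lesssim\frac{n}{h}\,q^{(1-\theta)(h+1)}$, and \eqref{eq:main-cond} then gives $B\le\theta q^{h+1}(1+o(1))$ --- note this is \emph{not} $o(q^{h+1})$, only a constant strictly less than $1$ times $q^{h+1}$, so your final "$o(q^{h+1})$" claim also needs to be replaced by an explicit comparison of constants against the sieve main term $\zeta_q(k)^{-1}q^{h+1}$.

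Two further points are genuinely missing rather than merely deferred. First, the chain itself: the paper takes $d_0$ \emph{ordinary} derivatives (so $P^{k-d_0}=P^{k_1p}$ survives), continues to the $(p-1)$st derivative, which lies in $\mathbb F_q[x^p]$ and hence equals $H^p$ with $P^{k_1}\mid H$ and $H$ confined to an interval of length $\approx(h+1)/p$, and iterates this $a$ times before finishing with $d-1$ derivatives; Hasse derivatives do not produce the coefficient annihilation that drives the dimension drop, so "carefully chosen $D^{(j)}$" cannot simply be substituted. Second, the degenerate case where the iterated derivative vanishes (e.g.\ the interval contains $p$th powers): then "one $G$ per large irreducible factor of $H$" is meaningless for $H=0$, and this case contributes a separate term $q^{\theta(h+1)+O(1)}$ that must be accounted for, as in \eqref{eqn:s3-a2}. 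Also, your rigidity reduction is applied to all $\deg P>h/k$, but uniqueness along the chain needs $\deg P$ to exceed the lengths of all the derived intervals (in particular $\deg P>h$); the range $h/k<\deg P\le h$ should instead be disposed of by the trivial count $\pi_q(d)\ll q^h/h=o(q^{h+1})$.
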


Note that when $k=2$, we have $\theta=p^{-1}$, and inequality \eqref{eq:main-cond} becomes $\eqref{eq:carmon-cond}$ with $g(n)=1$.  In general, $\theta$ is a non-decreasing function of $k$ such that $\theta=(k-1)p^{-1}$ when $2 \le k \le p$, and 
\[  1 - \frac {(d+1)(p-d+1)}{pk} < \theta \le 1 - \frac 1k \] 
when $k > p$. In particular, as $k$ increases, the gap between the barrier imposed by \eqref{eq:hbound} and the hypothesis \eqref{eq:main-cond} of Theorem~\ref{thm:main} shrinks, and our result gets closer to being best possible. 

The method of proof of Theorem \ref{thm:main} can be adjusted to yield variants that are superior in different ways. As stated, the theorem is close to the best result one can obtain from the basic version of our method. This lets us avoid some technical details. However, as we note at the end of~\S\ref{sec:carmon}, if one is interested in an asymptotic for the number of $k$-free polynomials in $\mathcal I_q(F, h)$, similar to that in the original work of Carmon and Entin \cite{CaEn21}, one may obtain such an asymptotic for $n \to \infty$ at the cost of strengthening condition \eqref{eq:main-cond} to 
\begin{equation}\label{eq:main-cond1}
q^{h+1}  > \left( \frac {g(n)n}{\log_q n} \right)^{1/\theta} 
\end{equation} 
with $g(n) \to \infty$. One can also relax hypothesis \eqref{eq:main-cond} to \eqref{eq:main-cond1} with $g(n) = c$, where $c$ is any constant satisfying
$c > \theta \zeta_q(k)p^{-a}$. As $\theta \zeta_q(k)p^{-a} < 1$, this is a slight improvement on Theorem~\ref{thm:main}.

A notable feature of the modern results on gaps between $k$-free integers is that they can be made fully explicit. For example, in recent joint work with McCormick, Scherr, and Ziehr \cite{KMMPSZ1}, the authors proved an explicit version of the theorem of Filaseta and Trifonov~\cite{FT92}: the main result of \cite{KMMPSZ1} establishes that the interval $(x, x + 11x^{1/5}\ln x]$ contains a squarefree integer for any $x \ge 2$. The next theorem provides a model for such results for polynomials over $\mathbb F_q$. Note that---in contrast to Theorems \ref{prop:crt}, \ref{thm:main}, and \ref{thm:k} and similar to the main result of~\cite{KMMPSZ1}---this theorem makes the restriction on the size of the degree $n$ explicit. 

\begin{theorem}\label{thm:n3}
Let $k \ge 2$ and $q \ge 3$ be fixed integers. If $n \ge k+1$ and $F \in \mathcal M_q(n)$, the interval $\mathcal I_q(F,h)$ contains a $k$-free polynomial for all $h \geq n/(k+1)$. 
\end{theorem}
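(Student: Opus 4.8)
The plan is to bound the number $N$ of non-$k$-free polynomials in $\mathcal I_q(F,h)$ and show $N<q^{h+1}$, so that a $k$-free polynomial must survive. Since $\mathcal I_q(F,h)\subseteq\mathcal I_q(F,h')$ whenever $h\le h'$, it suffices to treat $h=\lceil n/(k+1)\rceil$; in particular $h<n$, so every $Q\in\mathcal I_q(F,h)$ is monic of degree $n$, and $Q$ fails to be $k$-free precisely when $P^k\mid Q$ for some monic irreducible $P$ of degree $j$ with $1\le j\le n/k$. I would split the count according to whether $jk\le h+1$ (``small'' witnesses) or $jk>h+1$ (``large'' witnesses). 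For the small ones an elementary sieve suffices: for a fixed monic irreducible $P$ of degree $j$ with $jk\le h+1$, exactly $q^{\,h+1-jk}$ members of $\mathcal I_q(F,h)$ are divisible by $P^k$, and summing over all such $P$ with the bound $q^j/j$ for the number of irreducibles of degree $j$ gives a total at most $q^{h+1}\sum_{j\ge1}j^{-1}q^{\,j(1-k)}=q^{h+1}\ln\zeta_q(k)\le q^{h+1}\ln\tfrac32<\tfrac12q^{h+1}$, using $k\ge2$ and $q\ge3$.

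The large witnesses are the heart of the matter, and here the argument follows the classical ``lattice points near a curve'' step behind the elementary gap bounds of Filaseta and Trifonov. If $P^k\mid Q$ with $jk>h+1$, then writing $Q=P^kB+R$ with $\deg R<jk$ forces $R=F-Q$ to have degree $\le h$; thus $R$ is the remainder of $F$ on division by $P^k$, and $Q$ and $P$ determine each other. So I would bound the number of monic $P$ of degree $j$ for which $F\bmod P^k$ has degree $\le h$. The key rigidity input is the estimate $\deg(P_1^k-P_2^k)\ge(k-p^{a})j$ for distinct monic $P_1,P_2$ of degree $j$, where $p^{a}\,\|\,k$: one writes $k=p^{a}k'$ with $p\nmid k'$ and $P_i^k=\widehat P_i(x^{p^{a}})^{k'}$ for the Frobenius twist $\widehat P_i$, and then $k'\not\equiv0$ in $\mathbb F_q$ controls the leading term of $\widehat P_1^{k'}-\widehat P_2^{k'}$. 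Fixing the quotient $B$, this estimate forces at most one admissible $P$ of degree $j$ as soon as $p^{a}j<n-h$, which (as $h\le n/(k+1)+1$ and $k\ge2$) holds for every $j\le n/k$ unless $k=p^{a}$. Hence, away from that case, the number of large witnesses of degree $j$ is at most $\min(q^j/j,\,q^{\,n-jk})$, counting either admissible $P$'s or admissible quotients $B$.

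The exceptional case $k=p^{a}$ is the main obstacle, precisely because there $k$-th powers lose their rigidity: $P_1^k-P_2^k$ can be a nonzero constant. I would handle it by decomposing polynomials along residues of exponents modulo $p^{a}$: writing $Q=\sum_{r=0}^{p^{a}-1}x^rQ_r(x^{p^{a}})$ and likewise for $F$, the relation $Q=P^kG$ with $P^k$ a polynomial in $x^{p^{a}}$ forces, for each $r$, a divisibility $\Pi\mid(F_r-R_r)$ in $\mathbb F_q[y]$, where $y=x^{p^{a}}$, $\Pi$ is the Frobenius twist of $P$ (of degree $j$), and $\deg_yR_r\le(h-r)/p^{a}$. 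Taking the component of largest degree, with index $r^\ast\equiv n\pmod{p^{a}}$, reduces the count to a one-variable division problem of exactly the previous shape but with the exponent deflated from $k$ to $1$ and with $n,h$ replaced by $(n-r^\ast)/p^{a},(h-r^\ast)/p^{a}$; carrying out that count gives, for each $j$, a bound of the form $\max\bigl(q^{\,(n-jk)/k+O(1)},\,q^{\,h/k+O(1)}\bigr)$, or the sharper $\min$-bound in the sub-range where the rigidity step still applies.

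Finally, summing the large-witness bounds over $(h+1)/k<j\le n/k$ yields a sum that decays geometrically on either side of a single dominant term, located near $j=n/(k+1)$; since $q^{\,n/(k+1)}\le q^h$ and there is a polynomial-in-$n$ saving (from the factor $1/j$, or from the deflated exponent when $k=p^{a}$), the whole contribution is $o(q^{h+1})$ as $n\to\infty$. Combined with the small-witness estimate this gives $N<(\ln\tfrac32+o(1))q^{h+1}<q^{h+1}$ for all large $n$; the finitely many remaining values of $n\ge k+1$ (for each fixed $k$) are then settled by the same explicit inequalities. The two points requiring genuine care are the characteristic-$p$ obstruction just described and — since, unlike the preceding theorems, the restriction on $n$ here must be made completely effective — the bookkeeping of every constant uniformly in $n$.
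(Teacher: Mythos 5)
Your overall strategy is the same as the paper's: bound the number of non-$k$-free polynomials in $\mathcal I_q(F,h)$ by summing over irreducible witnesses $P$, handle small $\deg P$ with the elementary sieve (your small-witness estimate is exactly the paper's $\Sigma_1$, with the same numerical anchor $\ln\zeta_q(k)\le\ln\frac32$), and for large $\deg P$ exploit the rigidity of $k$th powers. Your key input $\deg(P_1^k-P_2^k)\ge(k-p^a)j$, via the factorization $P^k=\widehat P(x^{p^a})^{k'}$ with $p\nmid k'$, is precisely the content of Proposition \ref{prop1} (there phrased through the least $r$ with $p\nmid\binom kr$, which equals $p^a$), and your dichotomy ``$k=p^a$ is exceptional'' is the paper's case $r=k$. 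The organization differs only mildly: you fix the quotient $B$ and show there is at most one admissible $P$ per quotient, then count quotients to get $q^{n-jk}$, whereas the paper proves a spacing bound on the set $\mathcal S_q(d)$ and applies the box-counting Lemma \ref{lem2.3}; for $k=p^a$ you descend by Frobenius to an exponent-$1$ divisibility problem, while the paper combines the two alternatives of Proposition \ref{prop1} into Lemma \ref{lm:p2-n3}. Both routes yield comparable bounds.

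The genuine gap is effectivity. The theorem asserts the conclusion for every $n\ge k+1$, and your argument as written only delivers it for $n$ large. Two of your asymptotic claims are not correct at face value: the large-witness contribution is not $o(q^{h+1})$, since the tail $\sum_{j>h}q^{n-jk}$ is of size $\asymp q^{-k-1}\cdot q^{h+1}$, a fixed positive proportion (small only because $q^{-k-1}\le 1/27$); and the condition $p^aj<n-h$ does not hold for all $j\le n/k$ when $k\ne p^a$ — it fails at the boundary, e.g.\ $k=2$, $p$ odd, $n=4$, $h=2$, $j=2$. Neither slip is fatal, because the $\min$ with $q^j/j$ and the explicit constants rescue the final inequality, but that rescue is exactly the bookkeeping you defer to ``the finitely many remaining values of $n$,'' and it is delicate: in the paper's fully explicit bound \eqref{eq:n/3-bnd} the coefficient of $q^{h+1}$ reaches about $0.92$ when $q=3$, $k=2$, $h=2$, and the case $q=3$, $k=2$, $h=1$ must be handled separately (there $\Sigma_1$ is vacuous and its logarithmic term must be dropped for the inequality to close). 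Without carrying out that explicit verification, your proof establishes the theorem only for $n$ sufficiently large, which is strictly weaker than the stated result.
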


When $k=2$, this theorem corresponds to the classical result that the interval $(x,x+x^{1/3}]$ contains a squarefree integer for all sufficiently large $x$. A slightly stronger version of this was first proved by Davenport in 1951, but not published at the time; its elementary and rather elegant proof can be found in Halberstam's survey \cite{Halb83}. While all the estimates in the proof of Theorem \ref{thm:main} can be made fully explicit, thus allowing us to quantify the hypothesis that ``$n$ is sufficiently large,'' the method is not suited to yield non-trivial results when $n$ and $h$ are as small as they can be in Theorem \ref{thm:n3}.  See Table \ref{table1} for a comparison of the values of $h$, $n$ and $q$ for which the results developed in this paper are applicable.   We prove this result using a variant for polynomials over finite fields of a differencing technique introduced by Halberstam and Roth \cites{HR51, Roth51} and later developed by Filaseta and Trifonov \cites{FGT15, FT89, FT92, Tr95}. The proof of Theorem \ref{thm:n3} requires only the most basic form of the differencing method. A slightly more sophisticated version of those ideas yields the following result.

\begin{theorem}\label{thm:n4}
Let $k \ge 2$ and $q \ge 7$ be fixed integers such that $ \chr(\mathbb F_q) \nmid (k+1)$. If $n \ge k+1$ and $F \in \mathcal M_q(n)$, the interval $\mathcal I_q(F,h)$ contains a $k$-free polynomial for all $h \geq n/(k+2)$. Moreover, the same conclusion holds when $k \ge 3$ and $q \ge 5$.
\end{theorem}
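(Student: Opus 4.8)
The plan is to refine the differencing argument used for Theorem \ref{thm:n3} by using second differences (i.e. a second-order divided-difference operator) instead of first differences. Recall the basic setup: a polynomial $Q \in \mathcal I_q(F,h)$ fails to be $k$-free precisely when $P^k \mid Q$ for some non-constant $P$; writing $Q = P^k R$, the ``dangerous'' moduli $P$ of degree $m$ contribute at most roughly $q^{h+1-km}$ polynomials each (once $km \le h$ this count can only be handled when $m$ is small), while for the larger values of $m$ one must show that two distinct $Q_1, Q_2$ in the interval cannot both be divisible by $P^k$ for the same $P$ of degree $m > h/k$ unless the $Q_i$ are related in a controlled way. The first-difference method exploits that if $P^k \mid Q_1$ and $P^k \mid Q_2$ with $Q_1 \ne Q_2$ and $\deg(Q_1 - Q_2) \le h$, then $P^{k-1} \mid Q_1' $-type relations force $\deg P$ to be small; counting the admissible $(P, Q_1)$ pairs and optimizing the cutoff between ``small $m$'' and ``large $m$'' gives the exponent $1/(k+1)$. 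To reach $1/(k+2)$, the idea is that among any three polynomials in the interval divisible by $P^k$, one can form a second divided difference that is divisible by a high power of $P$ but has much smaller degree, squeezing $\deg P$ down further; balancing the new trichotomy of ranges for $m$ yields the improved exponent.

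The key steps, in order: (1) Fix $F \in \mathcal M_q(n)$ and suppose for contradiction that $\mathcal I_q(F,h)$ contains no $k$-free polynomial with $h \ge n/(k+2)$. Every $Q$ in the interval is then divisible by $P_Q^k$ for some non-constant monic $P_Q$; let $m = \deg P_Q$. (2) Dispose of the range of small $m$ (say $m \le h/(k+\text{something})$) by the trivial divisor count $\sum_{m \text{ small}} \#\{Q : P^k \mid Q, \deg(F-Q)\le h\} \le \sum_m (\text{number of monic } P \text{ of degree } m)\cdot q^{\max(0,h+1-km)}$, which is $o(q^{h+1})$ in the chosen range. (3) For the complementary large-$m$ range, consider three polynomials $Q_1, Q_2, Q_3$ in the interval sharing a common $P^k$ (or, more carefully, group the $Q$'s by their associated $P$ and, within a group, pick triples); form the second-order divided difference $\Delta_2 = \frac{(Q_1-Q_2)(x-\beta) - (Q_1 - Q_3)(x-\gamma)}{\cdots}$ — or, more cleanly in the polynomial setting, observe that $Q_i - Q_j$ for $i\ne j$ is divisible by a large power of $P$ while having degree $\le h$, and take a further $\mathbb F_q$-linear combination that kills the top-degree terms and is still divisible by a suitably high power of $P$. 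The condition $\chr(\mathbb F_q)\nmid(k+1)$ and the size restrictions on $q$ enter here: they guarantee that the relevant binomial-type coefficients and the Wronskian-style determinants one forms do not vanish in $\mathbb F_q$, so the constructed combination is genuinely nonzero and of controlled degree. (4) Conclude that $\deg P$ must be so small that it lies in the already-handled range, a contradiction, unless $\mathcal I_q(F,h)$ contained a $k$-free polynomial after all. (5) Optimize the cutoffs to see that the argument closes exactly at $h = n/(k+2)$; the cruder threshold $q \ge 5$, $k \ge 3$ comes from allowing a slightly lossier version of step (3) that avoids the divisibility hypothesis on $\chr(\mathbb F_q)$.

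The main obstacle I expect is step (3): constructing, from two or three interval polynomials all divisible by $P^k$, an auxiliary polynomial that is simultaneously (a) nonzero, (b) of degree strictly below what the divisibility by a high power of $P$ would force, and (c) actually divisible by that high power of $P$ — all while the characteristic may divide various naturally occurring integers. In $\mathbb Z$ one uses integer divided differences and the fact that consecutive integers differ by $1$; over $\mathbb F_q[x]$ the analogue of ``shifting by $1$'' is shifting by a degree-$\le h$ polynomial, and one must be careful that iterated differences do not collapse to zero. Handling this cleanly — isolating exactly where $\chr(\mathbb F_q) \nmid (k+1)$ is needed, and producing the fallback bound valid for $q \ge 5$, $k \ge 3$ — is the technical heart of the proof, and also the place where the constraints $q \ge 7$ versus $q \ge 5$ get pinned down by requiring that certain short lists of nonzero field elements (coming from the degree-$\le h$ ``shift'' polynomials and from the coefficients in the divided differences) can be chosen distinct and nonvanishing.
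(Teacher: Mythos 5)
Your step (3) applies the second-difference idea to the wrong objects, and in the relevant range it cannot even get started. For an irreducible (or monic) $P$ with $\deg P = d > h$, the interval $\mathcal I_q(F,h)$ contains \emph{at most one} multiple of $P^k$: if $Q_1 \neq Q_2$ were two of them, then $P^k \mid (Q_1 - Q_2)$ while $0 \le \deg(Q_1-Q_2) \le h < kd$, a contradiction. So there are no pairs, let alone triples, of interval polynomials ``sharing a common $P^k$'' to difference, and the combination you propose to form from $Q_1,Q_2,Q_3$ does not exist precisely in the range $d > h$ that is the bottleneck. The actual mechanism (both in the integer work of Halberstam--Roth/Filaseta--Trifonov and in the paper) is to difference across \emph{distinct $k$th-power divisors}: one studies the set $\mathcal S_q(d)$ of monic $G$ of degree $d$ with $G^kA \in \mathcal I_q(F,h)$ for some $A$ (see \eqref{def:Td}), and for three distinct $G_1,G_2,G_3 \in \mathcal S_q(d)$ forms the second divided difference of $F/t^k$ at the $G_i$, i.e.\ $(G_3-G_2)F G_1^{-k} + (G_1-G_3)F G_2^{-k} + (G_2-G_1)F G_3^{-k}$, using $F = G_i^kA_i - R_i$ with $\deg R_i \le h$. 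A degree comparison shows the ``integral part'' of this expression is a nonzero polynomial, which forces $\max_{i<j}\deg(G_i-G_j) \ge ((k+2)d-n)/3$ (Proposition \ref{prop:4d-n/3}); feeding this spacing into the pigeonhole bound of Lemma \ref{lem2.3} with $\kappa=2$ gives $|\mathcal S_q(d)| \le 2q^{(n-(k-1)d)/3}$, and summing over $h< d\le n/k$ closes the argument at $h \ge n/(k+2)$. Your proposal never produces a bound on the number of admissible divisors of each degree, which is what the theorem actually needs.

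Two smaller inaccuracies: the nonvanishing issue is not a Wronskian over short shift lists but the single binomial count $\binom{k+1}{2}$ (number of monomials $G_1^aG_2^bG_3^c$ with $a+b+c=2k-2$, $0\le a,b,c\le k-1$), so the clean case needs $p \nmid k(k+1)$; the stated hypothesis $p \nmid (k+1)$ suffices because when $p \mid k$ one falls back on Proposition \ref{prop1} (with $r \ge 2$, or the $r=k$ case together with Lemma \ref{lm:p2-n3}) rather than on the triple argument. Also, the split between $q \ge 7$ for $k=2$ and $q \ge 5$ for $k \ge 3$ does not come from ``a lossier version avoiding the characteristic hypothesis''; it comes from the numerical size of the final explicit bound \eqref{eq4.3}, which dips below $q^{h+1}$ for $q\ge 5$ once $k \ge 3$ but only for $q \ge 7$ when $k=2$.
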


In the case $k=2$, this result matches a theorem due to Roth \cite{Roth51} (after some modification by Nair \cite{nair79}) that the interval $(x, x+cx^{1/4}]$ contains a squarefree integer for some absolute constant $c > 0$. When $k \ge 3$, however, Theorem \ref{thm:n4} falls short of matching  the theorem of Halberstam and Roth \cite{HR51} that, for any fixed $\varepsilon > 0$, the interval $(x, x + x^{1/(2k) + \varepsilon}]$ contains $k$-free integers when $x$ is sufficiently large. The next theorem accomplishes this.

\begin{theorem}\label{thm:k}
Let $k\geq 3$ and $q \ge 3$ be fixed integers such that $\chr(\mathbb F_q) \nmid k\binom {2k-1}{k-1}$. If $n$ is sufficiently large and $F \in \mathcal M_q(n)$, then the interval $\mathcal I_q(F,h)$ contains a $k$-free polynomial for all $h \geq n/(2k)$.  
\end{theorem}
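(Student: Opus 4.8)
The plan is to estimate the number of \emph{non}-$k$-free polynomials in $\mathcal I_q(F,h)$ and show it is strictly less than $q^{h+1}=|\mathcal I_q(F,h)|$. Since $h<n$, every $Q\in\mathcal I_q(F,h)$ is monic of degree $n$, so $Q$ fails to be $k$-free exactly when $P^k\mid Q$ for some monic irreducible $P$ with $k\deg P\le n$, and
\[
\#\{Q\in\mathcal I_q(F,h):Q\text{ not }k\text{-free}\}\ \le\ \sum_{j=1}^{\lfloor n/k\rfloor}\ \sum_{\substack{P\text{ irred}\\\deg P=j}}\ \#\{Q\in\mathcal I_q(F,h):P^k\mid Q\}.
\]
I would split the outer sum by the size of $j$. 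When $kj\le h+1$ the inner count is exactly $q^{h+1-kj}$ (the $Q\equiv0\ (\mathrm{mod}\ P^k)$ form that many residue classes in an interval of ``length'' $q^{h+1}$); summing over all such $P$ and using $\pi_q(j)\le q^j/j$ bounds this part by $q^{h+1}\sum_{j\ge1}q^{-(k-1)j}/j=q^{h+1}\log\zeta_q(k)$, which for $q\ge3$, $k\ge3$ is below $q^{h+1}/5$. When $kj\ge h+2$, two polynomials in $\mathcal I_q(F,h)$ divisible by $P^k$ differ by a multiple of $P^k$ of degree $\le h<\deg P^k$, hence coincide, so the inner count is $\le1$; thus the part with $h+2\le kj$ and $j\le h$ contributes at most $\sum_{j\le h}\pi_q(j)=O(q^{h+1}/h)$. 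What remains is the critical range $h<j\le n/k$, which is nonempty only when $n/(2k)\le h<n/k$; note that then $n/k\le 2h$, so this range lies inside $(h,2h]$.

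The heart of the matter is a \emph{differencing estimate}: for $h<j\le n/k$, the number $\mathcal N_j$ of monic irreducibles $P$ of degree $j$ such that $P^k\mid Q$ for some $Q\in\mathcal I_q(F,h)$ satisfies $\mathcal N_j\ll q^{j/2}$. Granting this, $\sum_{h<j\le n/k}\mathcal N_j\ll q^{n/(2k)}\le q^{h}\cdot\tfrac{\sqrt q}{\sqrt q-1}$, and adding the contributions of the three ranges gives a total of at most $\tfrac15 q^{h+1}+\tfrac{\sqrt q}{\sqrt q-1}\,q^{h}+o(q^{h+1})$, which for $q\ge3$ (and $n$ large) is strictly smaller than $q^{h+1}$ — so $\mathcal I_q(F,h)$ contains a $k$-free polynomial. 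To prove the differencing estimate I would reformulate ``$P^k\mid Q$ for some $Q\in\mathcal I_q(F,h)$'' as ``$P^k\mid F-G$ for some $G$ with $\deg G\le h$'', write $F-G=P^kH$, and apply Hasse derivatives $D^{(r)}$ for $0\le r\le k-1$ to get $D^{(r)}F\equiv D^{(r)}G\ (\mathrm{mod}\ P^{k-r})$ with $\deg D^{(r)}G\le h-r$. Reversing polynomials (or, equivalently, working in the Laurent series field $\mathbb F_q(\!(1/x)\!)$, where $\mathcal I_q(F,h)$ is a ball) turns the hypothesis into a strong Diophantine approximation statement: the reversal of $P$ is, up to a unit $k$th root, determined modulo a high power of $x$ by $\widetilde F^{1/k}$, and every admissible $P$ gives a polynomial of degree $j$ approximating $\widetilde F^{1/k}$ in $\mathbb F_q[[x]]$ to precision essentially $q^{-(kj-h)}$. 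Taking two, and then three, admissible primes and forming the associated divided differences — the Halberstam--Roth/Filaseta--Trifonov manoeuvre, carried out here in $\mathbb F_q[[x]]$ — forces all admissible reversed primes into a single residue class modulo $x^{\lceil j/2\rceil}$, which yields $\mathcal N_j\ll q^{j/2}$.

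I expect the differencing estimate to be the main obstacle, for two reasons. First, the recursion producing the square-root saving requires tracking leading coefficients through repeated differentiation and division, and the resulting combinatorial factor — a product of binomial coefficients that collapses to $k\binom{2k-1}{k-1}$ — must be invertible in $\mathbb F_q$; this is the origin of the hypothesis $\chr(\mathbb F_q)\nmid k\binom{2k-1}{k-1}$, and in particular forming $\widetilde F^{1/k}$ and $(1+u)^{1/k}$ already requires $\chr(\mathbb F_q)\nmid k$. Second, one must separately dispose of the \emph{inseparable} irreducibles $P$ (those with $P'=0$, i.e.\ $P\in\mathbb F_q[x^p]$), for which the derivative step degenerates; there are at most $q^{j/p}\ll q^{j/2}$ of these, so they are harmless, but they must be excised before running the main argument. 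A further, more bookkeeping-type difficulty is that for the smallest admissible field ($q=3$) the constants in the closing inequality are tight, so the estimates in all three ranges — and the implied constant in $\mathcal N_j\ll q^{j/2}$ — must be kept reasonably sharp.
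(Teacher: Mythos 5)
Your overall architecture---bounding the count of non-$k$-free polynomials, splitting by the degree $j$ of the irreducible $P$ with $P^k\mid Q$, and treating the range $j>h$ by a Halberstam--Roth differencing argument---matches the paper's. The gap is in your key lemma, the claim that $\mathcal N_j\ll q^{j/2}$ uniformly for $h<j\le n/k$. The divided-difference manoeuvre does not force all admissible $P$ of degree $j$ into a single residue class modulo $x^{\lceil j/2\rceil}$; what it actually proves (Proposition \ref{prop4}) is a local statement: any interval of length $\Delta_k=(2kj-n)/(2k-1)$ contains at most $2k$ admissible $k$th-power divisors, whence $\mathcal N_j\le 2k\,q^{j-\Delta_k}=2k\,q^{(n-j)/(2k-1)}$. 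This is $\le q^{j/2}$ only when $j\ge 2n/(2k+1)$ (the same threshold produced by the first- and second-difference estimates of Propositions \ref{prop1} and \ref{prop:4d-n/3}); as $j$ decreases toward $h=n/(2k)$, the length $\Delta_k$ tends to $0$ and the bound degrades to the essentially trivial $q^{j}$. Consequently your closing summation is unjustified: with the bound the method actually delivers, $\sum_{h<j\le n/k}\mathcal N_j$ is dominated by $j$ near $h$ and has order $k^2q^{h+1}$, which does not beat $q^{h+1}$.

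The paper repairs exactly this by inserting an intermediate range your decomposition lacks: for $h<j\le\ell$ with $\ell=h+\frac{2k-1}{2k}\log_q(qh)$ it uses only the trivial bound $\pi_q(j)\le q^j/j$ (contributing $O(q^{h+1}(qh)^{-1/(2k)})$), and applies the differencing bound only for $j>\ell$, where the geometric decay in $j$ gives $\Sigma_3\ll k^2q^{h+1}(qh)^{-1/(2k)}$; both errors are then $o(q^{h+1})$ and only $\ln\zeta_q(k)<1$ survives, which is precisely why the theorem requires $n$ sufficiently large. Separately, your heuristic that the reversal of $P$ is pinned down by $\widetilde F^{1/k}$ ignores the unknown cofactor $H$ in $F-G=P^kH$, which carries $n-kj$ free coefficients; neutralizing that cofactor is what the identity $(x-y)^{2k-1}=x^kP(x,y)-y^kQ(x,y)$ and the resulting degree-$(2k-2)$ equation over $\mathbb F_q[x]$ accomplish, and it is where the hypothesis $\chr(\mathbb F_q)\nmid k\binom{2k-1}{k-1}$ actually enters. (A minor point: $\mathbb F_q$ is perfect, so there are no inseparable irreducibles to excise.)
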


While this theorem matches the Halberstam--Roth result in terms of the sizes of the intervals, it is much weaker than Theorem \ref{thm:main} (and, unlike Theorem \ref{thm:n4}, it says nothing about polynomials of small degrees). On the other hand, its proof adapts the method used by Halberstam and Roth in their seminal paper \cite{HR51} (as presented in \cite{FGT15}). It also demonstrates how one may develop further the ideas behind Theorems \ref{thm:n3} and \ref{thm:n4}. In the integer setting, it is more advanced versions of those ideas that yield the best results by Filaseta and Trifonov on gaps between $k$-free integers. Indeed, Filaseta and Trifonov (see \cites{FGT15, FT96}) have used those ideas to make progress in other problems, and it is conceivable that further applications may exist in the function field setting too. For these reasons, it seems that the proof of Theorem~\ref{thm:k} is of independent interest (even though the result itself is superseded by Theorem \ref{thm:main}), and so it appears as an appendix to this paper.\footnote{It is possible to generalize the improvements of Filaseta and Trifonov to the polynomial setting as well.  These methods can be used to show that the interval $\mathcal I_q(F,h)$ contains a squarefree polynomial for all $h \ge n/5  + \log_qn$ when $n$ is sufficiently large and $p>3$. This result is strictly weaker than Theorem \ref{thm:main} and the proof substantially more involved, so we will not pursue it further here.}

The remainder of the paper is organized as follows. In \S\ref{sec:prelim}, we 
present the basic setup for the proofs and gather some preliminary facts about polynomials over finite fields. We also present present the proofs of Theorem \ref{thm:main} for $k=2$ and of Theorem ~\ref{thm:n3} in the case when $k$ is not divisible by the characteristic. In \S\ref{sec:erdosbound}, we establish Theorem \ref{prop:crt}. The proof of Theorem~\ref{thm:main} in the general case appears in \S\ref{sec:carmon}. In \S\ref{sec:spacings}, we develop polynomial analogues of the basic form of the methods used by Filaseta and Trifonov in their work on the gap problem for $k$-free integers: see Propositions \ref{prop1} and \ref{prop:4d-n/3} below. We then apply those results to prove Theorems~\ref{thm:n3} and~\ref{thm:n4}. Finally, as noted earlier, the appendix contains the proof of Theorem \ref{thm:k}, including our version of the Halberstam--Roth method (see Proposition \ref{prop4}).  

\begin{notation}  
Throughout the paper, the finite field $\mathbb F_q$ is considered fixed, and we use $p$ to denote its characteristic (so that, $q = p^f$ for some $f \in \mathbb N$). Beside the sets of monic polynomials $\mathcal M_q$ and $\mathcal M_q(d)$, we use $\mathcal P_q$ to denote the set of monic irreducible polynomials and $\mathcal P_q(d)$ the set of monic irreducible polynomials of degree $d$. We write $\pi_q(d)=|\mathcal P_q(d)|$ for the number of monic irreducible polynomials of degree $d$ in $\mathbb F_q[x]$; in general, $|\mathcal A|$ denotes the cardinality of a finite set $\mathcal A$. 
\end{notation}

\section{Preliminaries}
\label{sec:prelim}

Fix an integer $k \ge 2$. Our strategy to prove the existence of $k$-free polynomials in an interval $\mathcal I_q(F, h)$ will be to bound from above the number $\mathcal N_{q}(F,h)$ of polynomials in $\mathcal I_q(F,h)$ that are not $k$-free and to show that 
\begin{equation}\label{eqn2.0}
\mathcal N_{q}(F,h) <  |\mathcal{I}_q(F,h)| = q^{h+1}.     
\end{equation} 
Since every polynomial that is not $k$-free is divisible by the $k$th power of some monic irreducible polynomial (and the $k$th power of a polynomial of degree greater than $n/k$ cannot divide any polynomial in $\mathcal I_q(F,h)$), we find that
\begin{equation}\label{eqn:bound}
\begin{split}
    \mathcal N_q(F,h) &\leq \sum_{P \in \mathcal P_q} |\{Q\in \mathcal I_q(F,h) : P^k\mid Q \}| \\
    &= \sum_{d \le n/k} \sum_{P \in \mathcal P_q(d)} |\{Q\in \mathcal I_q(F,h) : P^k\mid Q \}|.  
\end{split}
\end{equation}
 
It will be useful to recall how many polynomials in $\mathcal{I}_q(F,h)$ are divisible by a fixed polynomial $G$.

\begin{lemma}\label{lem2.1}
Suppose $G \in \mathcal{M}_q(d)$. Then either $\mathcal I_q(F,h)$ contains no multiple of $G$, or 
\begin{equation}
|\{Q\in \mathcal I_q(F,h) : G \mid Q \}| 
= \begin{cases}
q^{h-d+1} & \text{if } d \leq h,\\
1 & \text{if } d > h.
\end{cases} \label{eq:gmultcount} 
\end{equation}
\end{lemma}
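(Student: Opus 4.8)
The plan is to parametrize the interval and convert divisibility by $G$ into a congruence, then count solutions. Every $Q\in\mathcal I_q(F,h)$ is uniquely of the form $Q=F+R$ with $R\in\mathbb F_q[x]$ and $\deg R\le h$ (using the convention $\deg 0=-\infty$, so $R=0$ is allowed), and conversely each such $R$ yields a $Q$ in the interval; moreover $G\mid Q$ if and only if $R\equiv -F\pmod G$. Hence the count in \eqref{eq:gmultcount} equals the number of $R$ with $\deg R\le h$ lying in the residue class of $-F$ modulo $G$.

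Let $V$ be the unique polynomial with $\deg V<d$ and $V\equiv -F\pmod G$. The polynomials congruent to $-F$ modulo $G$ are exactly those $R=V+GT$ with $T\in\mathbb F_q[x]$ (including $T=0$), and since $\deg V<d=\deg G$ we have $\deg(V+GT)=\deg(GT)=d+\deg T$ for $T\ne 0$, while $\deg R=\deg V<d$ for $T=0$. I would then split into the two cases of the statement. If $d\le h$, then $\deg V<d\le h$, so $T=0$ always gives an admissible $R$, and for $T\ne 0$ the condition $\deg R\le h$ reads $\deg T\le h-d$; counting all $T$ (including $T=0$) with $\deg T\le h-d$ gives $q^{h-d+1}$, which is the first line of \eqref{eq:gmultcount} and, in particular, shows the interval always contains a multiple of $G$ in this regime. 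If $d>h$, then $R=V+GT$ has degree $d+\deg T\ge d>h$ whenever $T\ne 0$, so the only possible $R$ is $R=V$; this lies in $\mathcal I_q(F,h)$ exactly when $\deg V\le h$, in which case it is the unique multiple of $G$ there, and otherwise $\mathcal I_q(F,h)$ contains no multiple of $G$. This is precisely the claimed dichotomy.

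The argument is elementary and I do not anticipate a genuine obstacle; the only points demanding care are the bookkeeping with the convention $\deg 0=-\infty$ (so that $R=0$, i.e.\ $Q=F$, is correctly included in the count) and the identity $\deg(V+GT)=\deg(GT)$ for $T\ne 0$, which is exactly where the normalization $\deg V<\deg G$ is used.
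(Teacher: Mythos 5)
Your proof is correct and follows essentially the same elementary counting as the paper: the paper fixes one multiple $GA$ in the interval and counts the others as $GB$ with $B\in\mathcal I_q(A,h-d)$, while you count the offsets $R=Q-F$ lying in the residue class of $-F$ modulo $G$, which is the same arithmetic-progression count in different clothing. A small bonus of your version is that it is unconditional, showing the ``no multiple'' alternative can only occur when $d>h$, whereas the paper's proof is phrased conditionally on a multiple already being present.
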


\begin{proof}
Suppose that $GA \in \mathcal I_q(F,h)$ for some polynomial $A \in \mathcal M_q$. When $d > h$, the interval can contain no other multiples of $G$; and when $d \le h$, we need to count the polynomials $GB$, with $B \in \mathcal{I}_q(A,h-d)$.
\end{proof}

The above lemma suffices to estimate the contribution to the right side of \eqref{eqn:bound} from irreducible polynomials $P$ of degrees $d \le \ell$, when $\ell$ is not much larger than $h$. We have
\begin{equation}\label{eqn2.3}
\sum_{d \le \ell} \sum_{P \in \mathcal P_q(d)} |\{Q\in \mathcal I_q(F,h) : P^k\mid Q \}| 
\le \sum_{d \leq h/k} \pi_q(d)q^{h-kd+1} \ + \sum_{h/k < d \le \ell} \pi_q(d) =: \Sigma_1 + \Sigma_2. 
\end{equation}
To bound $\Sigma_1$, $\Sigma_2$, and other similar sums below, we will use some well-known bounds for $\pi_q(d)$, which we state in the next lemma. The first claim of this lemma can be found in \cite{LN97}*{Corollary~3.21}, and the second claim is an immediate consequence of the first.

\begin{lemma}\label{lemma-prime-count}
For any natural number $n$, one has
\[ \sum_{d \mid n} d\pi_q(d) = q^n \quad \text{and} \quad \pi_q(n)\leq\frac{q^n}{n}. \]
\end{lemma}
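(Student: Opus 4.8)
The plan for the first identity is to compute the degree of the polynomial $x^{q^n} - x \in \mathbb F_q[x]$ in two ways, exploiting its factorization into monic irreducibles. The key structural fact is that
\[ x^{q^n} - x = \prod_{d \mid n} \prod_{P \in \mathcal P_q(d)} P, \]
in which every monic irreducible of degree dividing $n$ occurs exactly once. To justify this I would argue that the roots of $x^{q^n}-x$ are precisely the elements of $\mathbb F_{q^n}$, and that a monic irreducible $P$ of degree $d$ divides $x^{q^n}-x$ exactly when its roots lie in $\mathbb F_{q^n}$, i.e.\ when $\mathbb F_{q^d} \subseteq \mathbb F_{q^n}$, which holds if and only if $d \mid n$. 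Separability of $x^{q^n}-x$—its formal derivative is $-1$, so $\gcd\bigl(x^{q^n}-x,\,(x^{q^n}-x)'\bigr) = 1$ and the polynomial has no repeated irreducible factors—guarantees that each such $P$ appears to the first power, with no repetitions.

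Granting the factorization, the identity follows by comparing degrees of the two sides: the left-hand side has degree $q^n$, while the right-hand side has degree $\sum_{d \mid n} d\,\pi_q(d)$, since there are $\pi_q(d)$ factors of degree $d$ for each $d \mid n$. This yields $\sum_{d \mid n} d\pi_q(d) = q^n$. An alternative route, which sidesteps the structure theory of finite fields, is to start from the Euler product $\sum_{N \in \mathcal M_q} t^{\deg N} = \prod_{P \in \mathcal P_q}\bigl(1 - t^{\deg P}\bigr)^{-1}$ afforded by unique factorization, rewrite the left side as $(1-qt)^{-1}$ using $|\mathcal M_q(m)| = q^m$, take the logarithmic derivative, and compare coefficients of $t^n$; this is essentially the Euler-product expansion of $\zeta_q(s) = (1-q^{1-s})^{-1}$ with $t = q^{-s}$.

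The second claim is then immediate. Since $\pi_q(d) \ge 0$ for all $d$, every term on the left of $\sum_{d \mid n} d\pi_q(d) = q^n$ is nonnegative, so discarding all terms but $d = n$ gives $n\,\pi_q(n) \le q^n$, that is, $\pi_q(n) \le q^n/n$.

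I do not anticipate a serious obstacle here, as both parts are classical (the first identity is exactly \cite{LN97}*{Corollary~3.21}); the only step demanding genuine care is the justification that the irreducible factors of $x^{q^n}-x$ are precisely those of degree dividing $n$, which rests on the standard description of the subfields of $\mathbb F_{q^n}$ as the $\mathbb F_{q^d}$ with $d \mid n$.
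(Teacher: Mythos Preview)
Your argument is correct and is precisely the standard proof: the paper itself does not supply an argument but simply cites \cite{LN97}*{Corollary~3.21} for the identity and notes that the inequality is an immediate consequence. Your factorization-of-$x^{q^n}-x$ approach is exactly the one underlying that reference, so there is nothing to add.
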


Suppose that $k \le h$. Using this lemma, we find that
\begin{align}\label{eq:sig1bd}
\Sigma_1 &\le \sum_{d \leq h/k} \frac {q^{h+1} }{dq^{(k-1)d}} < q^{h+1} \ln\left( \frac {1}{1-q^{1-k}} \right) = q^{h+1}\ln \zeta_q(k),
\end{align}
and (assuming that $\ell \ge h$)
\begin{equation}\label{eq:sig2bd}
\Sigma_2 \le \sum_{h/k < d \leq \ell} \frac {q^d}d  < \frac {q^\ell}{\ell} + \frac {k}h \sum_{j = 0}^{\infty} q^{\ell - 1 -j} 
\leq \frac {q^\ell}{h} + \frac {kq^{\ell}}{(q-1)h} = \frac {(q+k_h-1)q^{\ell}}{(q-1)h},
\end{equation}
where $k_h \coloneqq \min(k,h)$. When $k > h$, the sum $\Sigma_1$ is empty, while $\Sigma_2$ satisfies the same bound, after a small adjustment to its proof:
\[ \Sigma_2 \le \sum_{d \leq \ell} \frac {q^d}d  < \frac {q^\ell}{h} + \frac {q^{\ell}}{(q-1)} = \frac {(q+k_h-1)q^{\ell}}{(q-1)h}. \]

\subsection{The classical approach}
\label{sec:2.1}

Returning to the contribution to the right side of \eqref{eqn:bound} from degrees $d>h$, we may apply Lemma \ref{lem2.1} to show that when $h < d \leq n/k$, we have
\begin{equation}
    \sum_{P \in \mathcal P_q(d)} |\{Q\in \mathcal I_q(F,h) : P^k\mid Q \}| \leq |\mathcal S_q(d)|, \label{useTd}
\end{equation}
where
\begin{equation}\label{def:Td}
\mathcal S_q(d) = \big\{ G \in  \mathcal{M}_q(d) : G^kA \in\mathcal I_q(F, h) \text{ for some } A \in\mathcal M_q \big\}. 
\end{equation} 

The shift of focus from the polynomials in $\mathcal I_q(F,h)$ to their $k$th-power divisors that occurs in inequality \eqref{useTd} is an  $\mathbb F_q[x]$-variant of the basic idea at the core of the proofs of most bounds on gaps between $k$-free integers mentioned in the introduction. In later sections, we prove several results about the ``spacing'' between polynomials divisible by $k$th powers as measured by the degrees of the differences between their $k$th-power factors. Such spacing results lead to upper bounds on $|\mathcal S_q(d)|$ through the following lemma.

\begin{lemma}\label{lem2.3}
Let $\mathcal S \subseteq \mathcal M_q(d)$, and suppose that $\kappa,\delta \in \mathbb R^+$, $\delta \leq d$, have the following property: for any fixed polynomial $G \in \mathcal S$, there exist at most $\kappa$ polynomials $H \in \mathcal S$ such that $\deg(G-H) < \delta$. Then     
\[ |\mathcal S| \leq \kappa q^{d-\delta}. \]
\end{lemma}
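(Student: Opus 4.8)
The plan is to use a greedy partition (or covering) argument on the set $\mathcal S$, grouping its elements according to the leading $d - \lceil\delta\rceil$ coefficients of their polynomials, i.e.\ according to their residue classes modulo the ``tail'' that has degree less than $\delta$. More precisely, first I would recast the hypothesis geometrically: the map sending a monic $G \in \mathcal M_q(d)$ to the truncation consisting of its coefficients of $x^{d-1}, x^{d-2}, \dots, x^{\lceil\delta\rceil}$ partitions $\mathcal M_q(d)$ into $q^{d - \lceil\delta\rceil}$ classes, and two polynomials $G, H \in \mathcal M_q(d)$ lie in the same class precisely when $\deg(G - H) < \lceil\delta\rceil \le \delta$ (using $\delta \le d$ so that the truncation is nonempty and well-defined; note $\deg(G-H) < \delta$ with $G \ne H$ forces $\deg(G-H) \le \lceil\delta\rceil - 1 < \delta$ since degrees are integers).

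Next I would observe that the hypothesis says exactly that each of these $q^{d-\lceil\delta\rceil}$ classes contains at most $\kappa$ elements of $\mathcal S$: if $G \in \mathcal S$ is in some class, then every other $H \in \mathcal S$ in the same class satisfies $\deg(G - H) < \delta$, and by hypothesis there are at most $\kappa$ such $H$ (including or excluding $G$ itself only changes the bound by a harmless additive constant, and since $\kappa$ is a real parameter we may as well absorb it — but cleanly, the hypothesis as stated counts $H$ with $\deg(G-H) < \delta$, which includes $H = G$, so ``at most $\kappa$ per class'' is immediate). Summing over all classes gives
\[ |\mathcal S| = \sum_{\text{classes } C} |\mathcal S \cap C| \le \kappa \cdot q^{d - \lceil\delta\rceil} \le \kappa q^{d-\delta}, \]
where the last step uses $\lceil\delta\rceil \ge \delta$. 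This completes the argument.

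The only subtlety — and it is minor rather than a genuine obstacle — is the interplay between the real parameter $\delta$ and the integer-valued degree function. One must be slightly careful that ``$\deg(G - H) < \delta$'' and ``$G, H$ agree in all coefficients down to degree $\lceil\delta\rceil$'' describe the same relation; this is where $\delta \le d$ is used (to ensure $\lceil\delta\rceil \le d$, so the truncated block of coefficients is a genuine block sitting below the monic leading term), and where the discreteness of $\deg$ makes the strict inequality harmless. I would state this correspondence explicitly as the one real step, then the counting is a one-line pigeonhole. No deeper input about $\mathbb F_q[x]$ is needed.
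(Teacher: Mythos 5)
Your argument is correct and is essentially the paper's own proof: the paper partitions $\mathcal M_q(d)$ into the intervals $\mathcal I_q(x^{k}Y,\,k-1)$ with $Y \in \mathcal M_q(d-k)$ and $k-1 < \delta \le k$, which is exactly your partition by the coefficients of $x^{d-1},\dots,x^{\lceil\delta\rceil}$, and then applies the same pigeonhole bound of $\kappa$ per class over $q^{d-\lceil\delta\rceil} \le q^{d-\delta}$ classes. Your handling of the real parameter $\delta$ versus the integer-valued degree (including counting $H=G$ itself) matches the paper's treatment, so no changes are needed.
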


\begin{proof}
Choose $k \in \mathbb N$ so that $k-1 < \delta \le k$. The intervals $\mathcal I_q( x^{k}Y(x), \ k-1)$, with $Y \in \mathcal M_q(d-k)$, form a partition of $\mathcal M_q(d)$. Let $\mathcal I$ be one such interval, and fix a polynomial $G \in \mathcal S \cap \mathcal I$. Since any two elements $G, H$ of $\mathcal S \cap \mathcal I$ must satisfy $\deg(G - H) < \delta$, by hypothesis, there are at most $\kappa$ possible polynomials $H \in \mathcal S \cap \mathcal I$, including $G$ itself. Thus,
\[ |\mathcal S \cap \mathcal I| \le \kappa. \]
Summing this estimate over all $q^{d-k} \le q^{d-\delta}$ intervals $\mathcal I$ of the above form, we get the desired bound.    
\end{proof}

For example, in Section \ref{sec:spacings}, we will show that when $p \nmid k$---and so Proposition \ref{prop1} holds with $r= 1$, any two distinct polynomials $G, H \in \mathcal S_q(d)$ satisfy $\deg(G-H) \ge (k+1)d-n$. 
Thus, when $d > n/(k+1)$, we may apply the above lemma with $\kappa = 1$ and $\delta = (k+1)d-n$ to obtain
\begin{equation}
|\mathcal S_q(d)|\leq q^{n-kd}. \label{bd1}    
\end{equation}
This bound suffices to give a quick proof of Theorem \ref{thm:n3} in the case when $k$ is not divisible by the characteristic.  The proof in the case $p\mid k$ will appear in Section \ref{sec:spacings}.

\begin{proof}[Proof of Theorem \ref{thm:n3}: The case $p \nmid k$]
When $h \ge n/(k+1)$, the condition $d > h$ implies $d \ge (n+1)/(k+1)$. So, we may apply \eqref{bd1} to all $d$ in the range $h < d \le n/k$ to get
\begin{align*}
\sum_{h < d \leq n/k} |\mathcal S_q(d)| &\leq \sum_{h < d \leq n/k} q^{n-kd} < q^{n-k(n+1)/(k+1)} \sum_{j \ge 0} q^{-kj} = \frac {q^{(n+k^2)/(k+1)}}{q^k-1} \leq \frac {q^{h + 1 + 1/(k+1)}}{q^2-q^{2-k}}.
\end{align*} 
Combining this bound with \eqref{eqn:bound} and \eqref{eqn2.3}--\eqref{useTd} with $\ell = h $, we find that
\begin{equation}\label{eq:n/3-bnd}
\mathcal N_q(F,h) \leq q^{h+1} \left( \ln \zeta_q(k) + \frac {q+k_h-1}{q(q-1)h} + \frac {q^{1/(k+1)}}{q^2-q^{2-k}}\right).    
\end{equation}  

When $k=2$, this establishes \eqref{eqn2.0} when $q \ge 5$ and $h \ge 1$ or when $q = 3$ and $h \ge 2$. Similarly, when $k \ge 3$, this inequality proves the theorem when $q \ge 3$. When $k=2$, $q=3$, and $h=1$, we are in the case $k > h$, so by our earlier observation, $\Sigma_1$ is empty and the logarithmic term on the right side of \eqref{eq:n/3-bnd} is superfluous. The stronger version of \eqref{eq:n/3-bnd} that results from its omission establishes the theorem in this last remaining case. 
\end{proof}

\subsection{The Carmon--Entin approach}
\label{sec:carmon-basic}

We now present a simplified version of the method of Carmon and Entin \cite{CaEn21}, which gives a quick proof of  Theorem \ref{thm:main} in the squarefree case $k=2$ for $q > 2$. (With small adjustments, the method can be applied to the case $q = 2$ as well, but we defer that discussion to the general proof in \S\ref{sec:carmon}.)

The method relies on two main observations. First, we note that when $G = P^2A$ for some polynomials $P$ and $A$, we have also $P \mid G'$, since $G' = 2PP'A + P^2A'$. This simple observation is central also to the proofs in \cite{CaEn21} of the more general theorems there.

Our second observation, which replaces a more elaborate construction in \cite{CaEn21}, is that 
in characteristic $p$, the coefficients of the monomials $x^{p-1}, x^{2p-1}, \dots$ in $G'$ vanish, and so $G' \in \mathcal{D}^{p-1}_q$,
where 
\[ \mathcal{D}_q^j \coloneqq \left\{ a_mx^{m}+ \dots + a_0 \in \mathbb F_q[x] \mid a_i = 0 \text{ if } i \equiv j \!\!\! \pmod {p}\right\}. \] 

Let  
\[ \Sigma_3 = |\{G \in \mathcal{I}_q(F,h):  P^2 \mid G \text{ for some } P \in \mathcal{P}_q, \, \deg(P) > h \} |. \]
By the above observations, any polynomial $G$ counted by $\Sigma_3$ has derivative $G' $ lying in $\mathcal I_q(F', h-1) \cap \mathcal D_q^{p-1}$, and $G'$ shares an irreducible factor $P$ with $G$, where $\deg P > h$. Note that
\[ |\mathcal{I}_q(F',h-1) \cap \mathcal D_q^{p-1} | = q^{h-\lfloor h/p\rfloor}, \]
which is significantly smaller than the total number of polynomials in $\mathcal I_q(F,h)$. Next, we show that the number of polynomials counted by $\Sigma_3$ is not much larger than the number of their derivatives.

Fix $H \in \mathcal{I}_q(F',h-1) \cap \mathcal D_q^{p-1}$, and let $P$ be an irreducible divisor of $H$ of degree at least $h+1$. Note that $P$ can divide at most one such polynomial $H$, so $P^2$ divides a polynomial in $\mathcal{I}_q(F,h)$ if and only if $H$ has an antiderivative in this interval which is divisible by $P$. Since, $H \in \mathcal D_q^{p-1}$, each monomial $a_ix^i$ of $H$ has an ``obvious'' antiderivative $a_i(i+1)^{-1}x^{i+1}$; let $H_0$ be the resulting antiderivative of $H$. The general antiderivative of $H$ is $H_0+C$ for any polynomial $C \in \mathbb F_q[x^p]$.  So, $H$ has an antiderivative divisible by $P$ in the interval $\mathcal I_q(F,h)$ if and only if there is a polynomial $C \in \mathbb F_q[x^p]$ that lies in the congruence class $C \equiv -H_0 \pmod{P}$ (in $\mathbb F_q[x]$) and is such that $H_0+C \in \mathcal{I}_q(F,h)$. Since $\deg P > h$, if such polynomials exist, at most one can lie in the interval $\mathcal I_q(F,h)$. 

Thus, whenever $H \neq 0$, each irreducible factor of $H$ of degree at least $h+1$ corresponds to at most one polynomial $G$ counted by $\Sigma_3$. Since $H$ has degree at most $n-1$, it has $ \leq (n-1)/(h+1)<n/(h+1)$ such irreducible factors.  On the other hand, if $H$ is identically zero (which can happen only when $\mathcal{I}_q(F,h)$ contains a $p$th power), then $H$ has exactly $q^{\lfloor h/p\rfloor + 1}$ antiderivatives in $\mathcal{I}_q(F,h)$. We conclude that 
\begin{align*}
    \Sigma_3 \leq \frac{n}{h+1}q^{h-\lfloor h/p\rfloor} + q^{\lfloor h/p\rfloor + 1}.
\end{align*}

Combining the last bound with the estimates for $\Sigma_1$ and $\Sigma_2$ in \eqref{eq:sig1bd} and \eqref{eq:sig2bd} with $k=2$ and $\ell = h$, we find that 
\begin{equation} \label{eq:sqfreeintbd}
    \mathcal{N}_q(F,h) \leq q^{h+1}\left( \ln\left( \frac q{q-1} \right) + \frac {q+1}{(q-1)qh} + \frac {nq^{-(h+1)/p}}{h+1} + q^{h(1/p-1)} \right).
\end{equation} 
When $h+1 \geq p(\log_q n - \log_q \log_q n)$ (this is equivalent to \eqref{eq:main-cond} with $\theta = p^{-1}$), we have
\[ (h+1)q^{(h+1)/p} \ge pn \left( 1 - \frac {\log_q \log_q n}{\log_q n} \right), \]
and our bound on $\mathcal N_q(F,h)$ simplifies to
\[ \mathcal{N}_q(F,h) \leq q^{h+1} \left( \ln\left( \frac q{q-1} \right) + \frac 1{p} + O\left(\frac {\log_q \log_q n}{\log_q n} \right)\right). \]
When $n$ is large and $q > 2$, this proves \eqref{eqn2.0} and establishes Theorem \ref{thm:main}. 

\section{Intervals without $k$-free polynomials}
\label{sec:erdosbound}

In this section, we establish the polynomial analog of Erd\H{o}s' result on large gaps between squarefree integers stated in Theorem \ref{prop:crt}. In its proof, we make use of the following lemma, which can be found in \cite{GKMZ}*{Theorem 4.1}.

\begin{lemma} \label{lem:kthirred}
Let $P_1, P_2, \dots, P_j, \dots$ be any ordering of the irreducible monic polynomials in $\mathbb F_q[x]$ such that $\deg P_j \le \deg P_{j+1}$. Then, as $j \to \infty$,
\[ \deg{P_j} \leq \log_q{j} + \log_q{\log_q{j}} +\log_q{(q-1)}+o(1). \]
\end{lemma}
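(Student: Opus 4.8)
The plan is to convert the statement into an inequality for the counting function $\Pi_q(m) := \sum_{d=1}^m \pi_q(d)$ (the number of monic irreducibles of degree at most $m$) and then invert it. Writing $D = \deg P_j$, the hypothesis that the degrees are arranged in non-decreasing order forces every irreducible of degree $\le D-1$ to appear among $P_1,\dots,P_{j-1}$; hence $\Pi_q(D-1) \le j-1 < j$. Since there are only finitely many irreducibles of each degree, $D \to \infty$ as $j \to \infty$, so it suffices to produce a sharp enough lower bound for $\Pi_q(D-1)$ and solve for $D$.

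The essential input is the asymptotic $\Pi_q(m) = \frac{q^{m+1}}{(q-1)m}(1+o(1))$, whose constant $\frac{q}{q-1}$ is exactly what produces the additive term $\log_q(q-1)$ in the conclusion. First I would upgrade Lemma \ref{lemma-prime-count} to a two-sided estimate for $\pi_q(d)$: combining the identity $\sum_{e \mid d} e\pi_q(e) = q^d$ with the trivial bound $e\pi_q(e) \le q^e$ gives
\[ d\pi_q(d) = q^d - \sum_{\substack{e \mid d,\, e < d}} e\pi_q(e) \ge q^d - \sum_{e \le d/2} q^e \ge q^d - \frac{q^{d/2+1}}{q-1}, \]
so that $\pi_q(d) = \frac{q^d}{d}\bigl(1 + O(q^{-d/2})\bigr)$. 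Summing over $d \le m$ and pulling $q^m/m$ out of $\sum_{d \le m} q^d/d = \frac{q^m}{m}\sum_{i=0}^{m-1}\frac{m}{m-i}q^{-i}$, the bracketed sum tends to $\sum_{i \ge 0} q^{-i} = \frac{q}{q-1}$, while the aggregate contribution of the $O(q^{-d/2})$ errors is $O(q^{m/2}) = o(\Pi_q(m))$; this yields the stated asymptotic.

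With $m = D-1$, the inequality $\Pi_q(D-1) < j$ becomes $\frac{q^D}{(q-1)(D-1)}(1+o(1)) < j$, i.e. $q^D \le (q-1)(D-1)\,j\,(1+o(1))$. Taking $\log_q$ gives
\[ D \le \log_q j + \log_q(D-1) + \log_q(q-1) + o(1). \]
A first, crude pass through this inequality (or even the weaker bound $j > \pi_q(D-1) \ge \frac{q^{D-1}}{D-1}(1-o(1))$) already shows $D = \log_q j + O(\log_q\log_q j)$, whence $D-1 = (\log_q j)(1+o(1))$ and therefore $\log_q(D-1) = \log_q\log_q j + o(1)$. Substituting this back produces the claimed bound
\[ \deg P_j = D \le \log_q j + \log_q\log_q j + \log_q(q-1) + o(1). \]

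I expect the main obstacle to be pinning down the constant $\frac{q}{q-1}$ in the asymptotic for $\Pi_q(m)$ with enough uniformity to survive the logarithm: a cruder treatment that retains only the top term $\pi_q(D-1)$ replaces $\log_q(q-1)$ by the weaker constant $1$, so the geometric-type summation of the full tail $\sum_{d \le m} q^d/d$—together with the verification that the prime-counting error terms are negligible against it—is the step that must be handled with care. The concluding bootstrap that turns $\log_q(D-1)$ into $\log_q\log_q j$ is then routine once the crude bound $D = \log_q j + O(\log_q\log_q j)$ is in hand.
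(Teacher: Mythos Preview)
Your argument is correct. The paper does not supply its own proof of this lemma; it simply cites \cite{GKMZ}*{Theorem 4.1}. Your self-contained derivation---obtaining the asymptotic $\Pi_q(m) \sim \tfrac{q^{m+1}}{(q-1)m}$ from the two-sided bound $\pi_q(d) = \tfrac{q^d}{d}(1+O(q^{-d/2}))$, then inverting via the inequality $\Pi_q(D-1) < j$ and a one-step bootstrap---is the standard route and recovers the sharp additive constant $\log_q(q-1)$ exactly because you sum the full geometric tail rather than keeping only the top term $\pi_q(D-1)$.
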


We also count precisely the number of polynomials in an interval covered by congruences modulo powers of irreducible polynomials of small degrees.

\begin{lemma} \label{lem:exactsieve}
    Let $k\geq 2$, $\ell \leq \log_q(h/k) -1$, and fix a congruence class $Q_j \pmod{ P_j^k}$ for every irreducible polynomial $P_j$ with $\deg P_j \leq \ell$.  Then the number of polynomials in any interval $\mathcal{I}_q(F,h)$ satisfying at least one of these congruences is exactly  
    \[ q^{h+1} \bigg( 1 - \prod_{d=1}^\ell\prod_{d \in \mathcal P_q(d)} \bigg( 1 - \frac 1{q^{kd}} \bigg) \bigg) =q^{h+1}\left(1-\frac{1}{\zeta_q(k)}+O\left(\frac{1}{\ell q^\ell}\right)\right). \]
\end{lemma}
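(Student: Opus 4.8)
The plan is to prove the claimed count by inclusion–exclusion over the irreducible polynomials $P_j$ of degree at most $\ell$, using Lemma~\ref{lem2.1} to compute the size of each term exactly, and then to estimate the resulting product using Lemma~\ref{lemma-prime-count}.

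First I would set up the inclusion–exclusion. For a squarefree monic polynomial $D$ all of whose irreducible factors have degree $\le \ell$, the polynomials $Q \in \mathcal I_q(F,h)$ lying in the congruence class $Q \equiv Q_j \pmod{P_j^k}$ for every $P_j \mid D$ are exactly the $Q$ in a single residue class modulo $D^k$, by the Chinese Remainder Theorem (the $P_j^k$ are pairwise coprime). The key observation is that $\deg(D^k) = k\deg(D) \le k\ell \cdot (\text{number of factors})$... more precisely we only need $\deg(D^k) \le h$ to apply the first case of Lemma~\ref{lem2.1}, and this is where the hypothesis $\ell \le \log_q(h/k) - 1$ enters: it guarantees $q^{k\ell} \le$ (something), but one has to be a little careful because $D$ can be a product of many small-degree irreducibles. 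Actually the cleaner route is to note that each congruence $Q\equiv Q_j\pmod{P_j^k}$ individually cuts the interval down by a factor $q^{-kd}$ where $d=\deg P_j \le \ell$, and since $\deg(D^k)=k\sum_{P_j\mid D}\deg P_j$; for the term to be computed by Lemma~\ref{lem2.1} with a nonempty intersection we need $k\deg D \le h$, but for \emph{any} $D$ with $k\deg D > h$ the intersection, if nonempty, contributes only $1$, and one must check these ``large $D$'' terms do not actually arise or cancel — this is the main obstacle (see below). Granting that, the number of $Q$ in the interval satisfying at least one congruence is, by inclusion–exclusion,
\[
\sum_{\substack{D \ne 1 \\ D \text{ sqfree, } P\mid D \Rightarrow \deg P \le \ell}} (-1)^{\omega(D)+1} q^{h+1-k\deg D} = q^{h+1}\left(1 - \prod_{d=1}^{\ell}\prod_{P \in \mathcal P_q(d)}\left(1 - \frac{1}{q^{kd}}\right)\right),
\]
where $\omega(D)$ is the number of irreducible factors; the product identity is just the factorization of the full inclusion–exclusion sum.

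The main obstacle is precisely the justification that the ``large-modulus'' terms behave correctly: a priori a squarefree product $D$ of many small irreducibles could have $k\deg D > h$, in which case Lemma~\ref{lem2.1} gives a count of $1$ rather than $q^{h+1-k\deg D}$, breaking the clean product formula. The resolution is to observe that if $k\deg D > h$ then at most one residue class mod $D^k$ meets $\mathcal I_q(F,h)$, and in fact the interval $\mathcal I_q(F,h)$ has only $q^{h+1}$ elements, each of which lies in a unique residue class mod $D^k$; so the ``expected'' count $q^{h+1-k\deg D}<1$ overcounts by less than $1$ per term. One then bounds the total discrepancy: the number of such $D$ with all prime factors of degree $\le\ell$ and $k\deg D\le h+k\ell$ (beyond that the intersection with the interval is empty) is controlled, and the accumulated error is absorbed into the stated $O(1/(\ell q^\ell))$; here the hypothesis $\ell\le \log_q(h/k)-1$, i.e. $q^\ell\le h/(kq)$, ensures these error terms are genuinely lower order. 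Alternatively — and I think this is the intended clean argument — one restricts the inclusion–exclusion to $D$ with $k\deg D\le h$ from the outset, notes that for such $D$ the formula is exact by Lemma~\ref{lem2.1}, and shows the tail $\sum_{k\deg D > h}q^{h+1-k\deg D}$ over squarefree $D$ with prime factors of degree $\le\ell$ is $O(q^{h+1}/(\ell q^\ell))$; the latter follows from Lemma~\ref{lemma-prime-count} bounds $\pi_q(d)\le q^d/d$.

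Finally I would evaluate the asymptotics of the product. Taking logarithms,
\[
\prod_{d=1}^{\ell}\prod_{P \in \mathcal P_q(d)}\left(1-\frac{1}{q^{kd}}\right)^{-1} = \prod_{d=1}^\ell\left(1-\frac{1}{q^{(k-1)d}}\right)^{-\pi_q(d)\cdot\frac{d}{d}}\cdots,
\]
but it is cleaner to compare directly with the full Euler product $\zeta_q(k) = \prod_{P\in\mathcal P_q}(1-q^{-k\deg P})^{-1}$, which converges since $\sum_d \pi_q(d) q^{-kd} \le \sum_d q^{-(k-1)d} < \infty$. The difference between the partial product over $d \le \ell$ and the full product is governed by the tail $\sum_{d > \ell}\pi_q(d)q^{-kd} \le \sum_{d>\ell}q^{-(k-1)d}/d \le \frac{1}{\ell}\sum_{d>\ell}q^{-(k-1)d} = O\big(q^{-(k-1)\ell}/\ell\big)=O(1/(\ell q^\ell))$ using $k\ge 2$, and since $1-\prod$ is a smooth function of $\prod$ near $1/\zeta_q(k)$, this yields $1 - 1/\zeta_q(k) + O(1/(\ell q^\ell))$, completing the proof. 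I expect the bookkeeping in the error analysis of the large-modulus terms to be the only place requiring genuine care; everything else is a routine Euler-product manipulation.
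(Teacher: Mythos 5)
Your overall skeleton (inclusion--exclusion over squarefree products $D$ of the small irreducibles, Lemma~\ref{lem2.1} for each term, then an Euler-product tail estimate via $\pi_q(d)\le q^d/d$) is the same as the paper's, and your tail estimate $\sum_{d>\ell}\pi_q(d)q^{-kd}=O(1/(\ell q^\ell))$ is fine. But there is a genuine gap at exactly the point you flag as ``the main obstacle'': you never establish the \emph{exact} first equality in the lemma, and you misread the role of the hypothesis $\ell\le\log_q(h/k)-1$. That hypothesis is not there to make the large-modulus error terms ``lower order''; it makes them nonexistent. Setting $M=\prod_{d=1}^{\ell}\prod_{P\in\mathcal P_q(d)}P^k$, Lemma~\ref{lemma-prime-count} gives
\[
\deg M=\sum_{d=1}^{\ell}kd\,\pi_q(d)\le k\sum_{d=1}^{\ell}q^d<\frac{kq^{\ell+1}}{q-1}\le\frac{h}{q-1}\le h,
\]
so \emph{every} squarefree $D$ supported on irreducibles of degree $\le\ell$ satisfies $k\deg D\le\deg M\le h$. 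Hence every inclusion--exclusion term falls into the first case of Lemma~\ref{lem2.1} (and for $\deg D^k\le h$ each residue class modulo $D^k$ meets $\mathcal I_q(F,h)$ exactly $q^{h+1-k\deg D}$ times, so the ``no multiple'' alternative cannot occur), which yields the product formula exactly, with no discrepancy to absorb.

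Your two proposed workarounds do not repair this: the ``discrepancy of less than $1$ per large $D$'' route and the ``truncate at $k\deg D\le h$ and bound the tail'' route each produce only an approximate count, so at best they recover the second (asymptotic) equality while the lemma asserts an exact count, and the bookkeeping you defer (``the number of such $D$ \dots is controlled'') is not obviously controllable, since the number of squarefree products of the roughly $q^\ell$ small irreducibles is enormous --- it is only the degree bound above that saves you, and once you have it the whole complication disappears. With that one observation inserted, your argument collapses to the paper's proof; the remaining Euler-product manipulation you give, comparing the partial product with $\zeta_q(k)^{-1}=1-q^{1-k}$ and bounding the tail by $O(1/(\ell q^\ell))$, is correct as written.
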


\begin{proof}
Define 
$ M = \prod\limits_{d=1}^\ell \prod\limits_{P \in \mathcal P_q(d)} P^k$.
We find that
\[ \deg M = \sum_{d=1}^\ell kd\pi_q(d) \le \sum_{d=1}^\ell kq^d  < \frac{kq^{\ell+1}}{q-1} \leq \frac{h}{q-1} \le h. \] 
Thus, we can apply the inclusion-exclusion principle and Lemma \ref{lem2.1} to get an exact count of the polynomials in $\mathcal I_q(F, h)$ covered by the congruence classes $Q_j \bmod P_j^k$. In particular, since $\deg M < h$ such an interval will always contain exactly
\[ q^{h+1} \bigg( 1 - \prod_{d=1}^\ell\prod_{d \in \mathcal P_q(d)} \bigg( 1 - \frac 1{q^{kd}} \bigg) \bigg) = q^{h+1}\left(1-\prod_{d=1}^\ell \bigg( 1 - \frac 1{q^{kd}} \bigg)^{\pi_q(d)}\right)\]
polynomials satisfying at least one of the congruences.  

Using that $\prod_{P \in \mathcal P_q} \big( 1 - q^{-k\deg P} \big) = \zeta_q(k)^{-1} = 1 - q^{-k+1}$, and the estimate
\[ \sum_{d > \ell} \ln\left( 1 - \frac 1{q^{kd}} \right)^{-\pi_q(d)} \leq \sum_{d > \ell} \frac {q^d}{d} \ln\left( 1 - \frac 1{q^{kd}} \right)^{-1} \ll \frac 1{\ell q^\ell}. \]
we find that
\begin{align*}
\prod_{d=1}^\ell \bigg( 1 - \frac 1{q^{kd}} \bigg)^{\pi_q(d)} &= (1 - q^{1-k}) \prod_{d > \ell} \left( 1 - \frac 1{q^{kd}} \right)^{-\pi_q(d)} \\
&= \zeta_q(k)^{-1}+O\left(\frac{1}{\ell q^\ell} \right)
\end{align*}
and the result follows.
\end{proof}

\begin{proof}[Proof of Theorem \ref{prop:crt}]
Let $h$ be large. We will use the Chinese Remainder Theorem to construct a polynomial $F$ of degree at most $n$ such that no polynomial in $\mathcal I_q(F,h)$  is squarefree when $h$ satisfies \eqref{eq:hbound}. 

The construction is based on a simple idea. Let $P_1, P_2, \dots$ be an ordering of $\mathcal P_q$ such that $\deg P_j \le \deg P_{j+1}$. If $Q_1, Q_2, \dots, Q_m$ are any polynomials such that the congruence classes $Q_j \bmod P_j^k$, $j \le m$, cover the interval $\mathcal I_q(0,h)$, then the interval $\mathcal I_q(F,h)$ contains no $k$-free polynomial whenever $F$ satisfies the congruences
\[ F \equiv -Q_j \pmod {P_j^k} \qquad (1 \le j \le m). \]
Since the Chinese Remainder Theorem determines such a polynomial $F$ modulo $P_1^k \cdots P_m^k$, we can find a nontrivial solution of these congruences of degree $\le \deg(P_1^k\cdots P_m^k)$. We can use Lemma \ref{lem:kthirred} to bound the degree of such a polynomial $F$. We have
\begin{align*}
\deg F &\leq k\sum_{j=1}^m \deg P_j 
\leq k\sum_{j=1}^m (\log_q j + \log_q\log_q j +\log_q(q-1)+ o(1)) \\
&= \frac k{\ln q} \ln(m!) + km\log_q\log_q m + km\log_q(q-1)+ o(m)\\
&= km\left(\log_q m + \log_q\log_q m + \log_q\left(\frac {q-1}e \right) + o(1)\right) =: \delta(m), 
\end{align*}
where the last step uses Stirling's formula. Thus, the proposition will follow, if we show that condition \eqref{eq:hbound} allows us to find an integer $m$ with $\delta(m) \le n$ and $m$ congruence classes $Q_j \bmod P_j^k$ that cover $\mathcal I_q(0,h)$.

The simplest way to find such a congruence cover is to use a separate congruence class for every polynomial in $\mathcal I_q(0,h)$. Then $m = q^{h+1}$. This already suffices to establish the theorem when the constant $c$ in \eqref{eq:hbound} satisfies $c < (kq)^{-1}$. It is clear, however, that this simple argument is somewhat wasteful. Next, we use Lemma \ref{lem:exactsieve} to cover multiple polynomials by congruences modulo small irreducible polynomials. 

Define $\ell \coloneqq \lfloor \log_q (h/k) \rfloor - 1$ and let 
$m_0 = \sum_{d = 1}^\ell \pi_q(d)$
be the number of irreducible polynomials in $\mathcal{M}_q$ of degree at most $\ell$.  By Lemma \ref{lem:exactsieve}, we find that the number of polynomials in $\mathcal{I}_q(0,h)$ covered by any choice of congruence classes $Q_i \bmod P_i$, for each $i\leq m_0$ is exactly
\[ q^{h+1} \bigg( 1 - \prod_{d=1}^\ell\prod_{d \in \mathcal P_q(d)} \bigg( 1 - \frac 1{q^{kd}} \bigg) \bigg). \]
This leaves us, as $h \to \infty$, with 
\begin{align*}
   m_1 \coloneqq q^{h+1} \prod_{d=1}^\ell\prod_{d \in \mathcal P_q(d)} \bigg( 1 - \frac 1{q^{kd}} \bigg) &= q^{h+1}\left(\frac{1}{\zeta_q(k)} + O\left(\frac{1}{\ell q^\ell}\right)\right) \\&= q^{h+1}\left(\zeta_q(k)^{-1}+ o\left( h^{-1} \right)\right)  
\end{align*} 
uncovered polynomials, which we cover trivially, using one congruence class for each.  Thus, the total number of congruences we require is 
\begin{align}
    m = m_0+m_1 & = m_0 + q^{h+1}\left( \zeta_q(k)^{-1} + o\left( h^{-1} \right) \right)\nonumber \\ 
    &= q^{h+1}\left( \zeta_q(k)^{-1} + o\left( h^{-1} \right) \right),
\end{align} 
after noting that $ m_0 \le \sum_{q \le \ell} \frac {q^d}d \ll q^\ell \ll h$. Using this value of $m$ in our expression for $\delta(m)$ we find that
\begin{align*}
    \delta(m) &= k\zeta_q(k)^{-1}q^{h+1}\left(h + \log_q h + c_0 +o(1) \right),
\end{align*}
where $c_0 = \log_q((q-1)/e) + 1 - \log_q \zeta_q(k)$. 
Hypothesis \eqref{eq:hbound} ensures that, 
for sufficiently large $h$ and $n$, 
\begin{align}\label{eq:delta_bnd}
\delta(m) &\le \frac {kcn}{\zeta_q(k)\log_qn}(\log_q n  + O(1)) < n,
\end{align}  
by the assumption that $kc < \zeta_q(k)$.  Thus, the polynomial that we have constructed has degree at most $n$ and the result follows.
\end{proof}

\begin{remark}
One sees readily that if hypothesis \eqref{eq:hbound} is replaced by
\[ q^{h+1} \leq \frac {\zeta_q(k)}{k} \cdot \frac {nq^{\varepsilon(n)}}{\log_q n}, \]
inequality \eqref{eq:delta_bnd} can be refined to
\[ \delta(m) \leq \frac {nq^{\varepsilon(n)}}{\log_q n}\left( \log_q n + \log_q \left( \frac {q-1}{ke} \right)  + o(1) \right). \]
In particular, when $\varepsilon(n) \leq \log_q \big(1-\frac{c'}{\log_q n}\big)$ with $c' > \log_q \left( (q-1)/ke \right)$, we find that $\delta(m) < n$.
\end{remark}

\section{Proof of the main theorem}
\label{sec:carmon}

In this section, we extend the ideas from \S\ref{sec:carmon-basic} to prove Theorem \ref{thm:main}. We finish the section with brief remarks on the proof that justify our comments in the introduction about possible enhancements to the theorem. We also remark on the conclusions one can draw when $h$ and $n$ are of moderate size and how such conclusions compare to Theorems \ref{thm:n3} and \ref{thm:n4}.

\begin{proof}[Proof of Theorem \ref{thm:main}]
Consider an integer $k \ge 2$. Similarly to \S\ref{sec:carmon-basic}, we start from \eqref{eqn:bound} and use \eqref{eqn2.3}--\eqref{eq:sig2bd} to bound the contribution to the right side of \eqref{eqn:bound} from irreducible polynomials $P$ with $\deg P \le \ell=h$. Thus, we focus on the quantity 
\[ \Sigma_3 = |\{G \in \mathcal{I}_q(F,h):  P^k \mid G \text{ for some } P \in \mathcal{P}_q, \, \deg(P) > \ell \} |. \]

As in the case $k=2$ before, we find that if $P^k$ divides $G$, then $P$ divides the first $k-1$ derivatives of $G$, and that the $j$-th derivative, $G^{(j)}$, lies in the set 
\[ \mathcal{I}_q^{(j)} (F,h) \coloneqq \mathcal{I}_q(F^{(j)},h-j) \cap \bigcap_{i + j \ge p} \mathcal D_q^i. \]

When $k < p$, we may use these observations in a similar fashion to \S\ref{sec:carmon-basic} to complete the proof. To begin, let $h = ps + r$, with $0 \le r < p$. We observe that $\mathcal I_q^{(k-1)}(F,h)$ is contained in a shift of a finite-dimensional linear space over $\mathbb F_q$ of dimension 
\begin{align*}
h+1 - \sum_{i=1}^{k-1} \left\lceil \frac {h+1-i}{p} \right\rceil  &= h+1 - (k-1)s - \min(r+1,k-1)  
 \leq (h+1) \left( 1 - \frac {k-1}p \right).
\end{align*} 
Hence,
\[ \left| \mathcal I_q^{(k-1)}(F,h) \right| \leq q^{(h+1)(1 - (k-1)/p)}. \]
By a similar counting argument, we find that there are $\le q^{(k-1)(h/p + 1)}$ polynomials $G \in \mathcal I_q(F,h)$ with $G^{(k-1)} = 0$. Next, we will show that for each of the $< n/(\ell+1)$ irreducible factors $P$, with $\deg P > \ell$, of a nonzero polynomial $H \in \mathcal{I}_q^{(k-1)}(F,h)$, there is at most one $G\in \mathcal{I}_q(F,h)$ divisible by $P^k$. From this, we can conclude that
\begin{equation}\label{eqn:s3-bnda}
\Sigma_3 \leq \frac n{\ell+1} q^{(h+1)(1-(k-1)/p)} + q^{(k-1)(h/p + 1)} .    
\end{equation}

Consider a nonzero $H \in \mathcal{I}_q^{(k-1)}(F,h)$ and an irreducible factor $P$ of $H$ of degree at least $h+1$. A polynomial $G\in \mathcal{I}_q(F,h)$ divisible by $P^k$ exists if and only if we can find a finite sequence of polynomials $H_{k-1} = H, H_{k-2}, \dots, H_1, H_{0} = G$,  each divisible by $P$, such that 
\[ H_j \in \mathcal{I}_q^{(j)}(F,h), \quad H_j'=H_{j+1} \qquad (0 \le j < k-1). \]  
Since $\deg P > h$, $\mathcal{I}_q^{(j)}(F,h)$ can contain at most one multiple of $P$, so for each $j$, there is at most one possibility for the polynomial $H_{j}$. In particular, at most one possible polynomial $G \in \mathcal{I}_q(F,h)$ is divisible by $P^k$. This establishes our earlier claim and completes the proof of \eqref{eqn:s3-bnda}.

Suppose now that $k \geq p$ and $G \in \mathcal{I}_q(F, h)$ is divisible by $P^k$ for some $P\in \mathcal{P}_q$. We intend to take $p-1$ derivatives of $G$, but we need to proceed with care. Recall the base-$p$ representation of $k$: 
\[ k = dp^a + \dots + d_1p + d_0 =: k_1p + d_0. \]
After taking $d_0$ derivatives of $G$, we have $G^{(d_0)} = P^{k_1p}Q$ for some polynomial $Q$, and afterwards we find that
\[ G^{(j)} = P^{k_1p}Q^{(j-d_0)} \quad (j \ge d_0). \]
In particular, $P^{k_1p} \mid G^{(p-1)}$. On the other hand, $G^{(p-1)} \in \mathbb F_q[x^p]$, so $G^{(p-1)} = H^p$ for some polynomial $H \in \mathbb F_q[x]$. 

When $i > (h+1)/p-1$, the coefficient of  $x^i$ in $H$ depends only on a single coefficient of $F$. So $H \in \mathcal I_q(F_1, h_1)$, where $h_1 = \lfloor (h{+}1)p^{-1} \rfloor -1$ and $F_1=(F^{(p-1)})^{1/p}$ is a polynomial of degree $< np^{-1}$ determined uniquely by $F$. Moreover, by the uniqueness of polynomial factorization in $\mathbb F_q[x]$, we have $P^{k_1} \mid H$. On the other hand, if $H \in \mathcal I_q(F_1, h_1)$ is nonzero and divisible by $P^{k_1}$ for some irreducible polynomial $P$, with $\deg P > \ell$, the argument we gave to justify \eqref{eqn:s3-bnda} shows that there is at most one polynomial $G \in \mathcal I_q(F,h)$ such that $G^{(p-1)} = H^p$ (and $G, G', \dots, G^{(p-1)}$ are all divisible by $P$). 

Let $\mathcal S_1 \subset \mathcal I_q(F_1, h_1)$ be a set (with $0 \in \mathcal S_1$ if $0\in \mathcal I_q(F_1, h_1)$) to be specified shortly. For any $H \in \mathcal S_1$, there are $q^{h-h_1}$ polynomials $G \in \mathcal I_q(F,h)$ with $G^{(p-1)} = H^p$, so we find that
\begin{equation}\label{eqn:s3-1}
\Sigma_3 \le \Sigma_{3,1} + q^{h-h_1}|\mathcal S_1|, 
\end{equation} 
where $\Sigma_{3,1}$ counts pairs $(H,P)$, with $P \in \mathcal P_q$, $H \in \mathcal I_q(F_1, h_1) \setminus \mathcal S_1,$ $P^{k_1} \mid H$, and $\deg P > \ell$.

When $k_1 \ge p$ (equivalently $a \ge 2$), we can iterate the above argument, with a slight twist. If $(H_1,P)$ is one of the pairs counted by $\Sigma_{3,1}$, the above construction with $H_1$ in place of $G$ yields a polynomial $H_2 \in \mathcal{I}_q(F_2,h_2)$, where $h_2 = \lfloor (h_1+1)p^{-1} \rfloor -1$ and $F_2$ a polynomial of degree $<np^{-2}$ determined uniquely by $F_1$ (and therefore, by $F$). Moreover, we have $P^{k_2} \mid H_2$, where $k_2 = (k_1-d_1)/p$. Suppose now that 
$\mathcal S_2 \subset \mathcal I_q(F_2, h_2)$ is a set of polynomials, to be specified shortly (with $0 \in \mathcal S_2$ if $0 \in \mathcal I_q(F_2, h_2)$. 

We now specify $\mathcal S_1$ as the set of polynomials $H \in \mathcal I_q(F_1, h_1)$ such that $H^{(p-1)} = A^p$ for some $A \in \mathcal S_2$ (note that this condition ensures that $0 \in \mathcal S_1$ if $0 \in \mathcal I_q(F_1, h_1)$). For each $A \in \mathcal S_2$, there are $\leq q^{h_1-h_2}$ polynomials $H \in \mathcal S_1$, so 
\begin{equation}\label{eqn:s3-1a}
 |\mathcal S_1| \leq q^{h_1 - h_2}|\mathcal S_2|.  
\end{equation} 
For any such choice of $\mathcal S_2$, we find that $ \Sigma_{3,1} \le \Sigma_{3,2}$,  
where $\Sigma_{3,2}$ counts pairs $(H,P)$, with $P$ irreducible, $H \in \mathcal I_q(F_2, h_2) \setminus \mathcal S_2$, $ P^{k_2} \mid H$,  and $\deg P > \ell$. Therefore, we deduce that
\begin{equation}\label{eqn:s3-2}
\Sigma_3 \le \Sigma_{3,2} + q^{h-h_2}|\mathcal S_2|. 
\end{equation} 

In general, we can iterate this argument a total of $a$ times to find a polynomial $F_a$ of degree $<np^{-a}$,  determined uniquely by $F$, such that
\[ \Sigma_{3} \le \Sigma_{3,a} + q^{h - h_a}|\mathcal S_a|, \]
where $h_a =\lfloor (h_{a-1}+1)p^{-1}\rfloor-1$, the set $\mathcal S_a \subset \mathcal I_q(F_a, h_a)$ is at our disposal to choose (so long as $0\in \mathcal{S}_a$ if $0 \in \mathcal I_q(F_a, h_a)$), and $\Sigma_{3,a}$ is the number of pairs $(H,P)$, with $P$ irreducible, subject to
\[ H \in \mathcal I_q(F_a, h_a) \setminus \mathcal S_a, \quad P^{d} \mid H, \quad \deg P > \ell. \]
A short computation shows that \begin{equation} \label{eq:habds}
    (h+1)p^{-a} - 3 \leq h_a \leq (h+1)p^{-a} - 1.
\end{equation} 
At this point, we choose $\mathcal S_a$ to be the set of polynomials $H \in \mathcal I_q(F_a, h_a)$ with $H^{(d-1)} = 0$, so $|\mathcal S_a| \leq q^{(d-1)(h_ap^{-1}+1)}$. 
Hence, 
\begin{equation}\label{eqn:s3-a}
 \Sigma_{3} \le \Sigma_{3,a} + q^{h - h_a + (d-1)(h_a/p+1)}. 
\end{equation}

When $d=1$, we apply the trivial bound for $\Sigma_{3,a}$:
\begin{equation} \label{eq:s3-ad=1}
    \Sigma_{3,a} \le (\deg F_a)(\ell+1)^{-1}|\mathcal I_q(F_a, h_a)| < \frac {n}{p^a(\ell+1)} q^{(h+1)p^{-a}}. 
\end{equation} 
When $d > 1$, we may bound $\Sigma_{3,a}$ using a variant of \eqref{eqn:s3-bnda} with $h = h_a$, $k = d$, and $n = \deg F_a$. Recall that the second term on the right side of \eqref{eqn:s3-bnda} accounts for polynomials in $G \in \mathcal I_q(F,h)$ with $G^{(d-1)} = 0$. Thus, by our choice of $\mathcal S_a$, the respective bound for $\Sigma_{3,a}$ becomes
\[ \Sigma_{3,a} \le (\deg F_a)(\ell+1)^{-1}q^{(h_a+1)(1-(d-1)/p)} < \frac {n}{p^a(\ell+1)} q^{(h+1)p^{-a}(1-(d-1)/p)}. \]
Note that setting $d=1$ in the bound above yields the exact same expression as \eqref{eq:s3-ad=1}.  
So, in either case, using this in \eqref{eqn:s3-a} along with \eqref{eq:habds} yields
\begin{equation}\label{eqn:s3-a2}
 \Sigma_{3} \le \frac {n}{p^a(h+1)} q^{(h+1)p^{-a}(1-(d-1)/p)} + q^{(h+1)(1 - (p-d+1)p^{-a-1}) + d+1}. 
\end{equation}

Note that when $a = 0$ and $d = d_0 = k < p$, \eqref{eqn:s3-bnda} is a slightly stronger version of \eqref{eqn:s3-a2} (whose second term contains an extra factor of $q^{2+(k-1)/p} < q^3$). Therefore, 
we combine \eqref{eqn:s3-a2} with \eqref{eqn:bound} and \eqref{eqn2.3}--\eqref{eq:sig2bd} to conclude, for sufficiently large $h$ and any $k\geq 2$, that
\begin{align}\label{eqn:N-bnd1}
\mathcal{N}_{q}(F,h) &\leq q^{h+1}\left( \ln \zeta_q(k) + \frac {n}{p^{a}(h+1)} q^{-\theta(h+1)} + O\left( h^{-1} \right) \right),
\end{align}
where $\theta = 1-(p-d+1)p^{-a-1}$.

If $h$ is chosen so that 
\begin{align}\label{eqn:h-cond} 
q^{h+1} \ge \left( \frac {cn}{\log_q n} \right)^{1/\theta} 
\end{align}
for some absolute constant $c > 0$, it follows that
\begin{align}\label{eqn:N-bnd2}
\mathcal{N}_{q}(F,h) &\leq q^{h+1}\left( \ln\zeta_q(k) + \frac{\theta}{p^ac} + O\left( \frac {\log_q h}{h} \right) \right).
\end{align}
Since $\theta \le 1 - k^{-1}$, this establishes the theorem for $c \ge 1$.    
\end{proof}

\begin{remark}
Suppose that $n$ and $h$ are large, and let $\Sigma_1'$, be the subsum of $\Sigma_1$ (in \eqref{eqn2.3}) with $h_0 < d \le h$, where $h_0 = o(h)$. Also, let 
\[ \Sigma_0 = \left| \left\{ Q \in \mathcal I_q(F,h) : P^k \mid Q \text{ for some } P \in \mathcal P_q, \; \deg P \le h_0 \right\} \right|. \]
Choosing $h_0$ sufficiently small in terms of $h$, one may apply a sieve argument (similar to the proof of Lemma \ref{lem:exactsieve}) to $\Sigma_0$ to obtain an asymptotic formula for $\Sigma_1$. One can then replace the term $\ln\zeta_q(k)$ in \eqref{eqn:N-bnd2} by $1-\zeta_q(k)^{-1}+o(1)$.  From this we see that $c$ can be taken to be any constant $c> \theta \zeta_q(k)p^{-a}$ so long as $n$ is taken sufficiently large. 

On the other hand, if the constant $c$ in \eqref{eqn:h-cond} is replaced by a function $g(n) \to \infty$ as $n \to \infty$, one may turn the above bounds into an asymptotic formula for the number $\mathcal Q_q(F,h)$ of $k$-free polynomials in $\mathcal I_q(F,h)$, since 
\[ \Sigma_0 - \Sigma_1' - \Sigma_2 - \Sigma_3 \leq \mathcal Q_q(F,h) \leq \Sigma_0. \]
\end{remark}

\begin{remark}

Theorems \ref{thm:n3} and \ref{thm:n4} give fully explicit ranges of $q$ and $n$ for which the short interval $\mathcal I_q(F,h)$ contains $k$-free polynomials under the respective constraints on $h$, because bounds like  \eqref{eq:n/3-bnd} above (see also \eqref{eq:n3-ch2}, \eqref{eq4.3}, and \eqref{eqn:n4-chk} in \S\ref{sec:spacings}) are explicit. Theorem \ref{thm:main}, on the other hand, is stated for sufficiently large $n$ to simplify the analysis of \eqref{eqn:N-bnd1}, which focuses on the case when $h$ and $n$ are large. However, if the contributions to the right side of \eqref{eqn:N-bnd1} from $\Sigma_2$ and $\Sigma_3$ are kept explicit, one can determine, for every fixed triple $(k,q,h)$, with $h \ge h_0(k,q)$, a range of degrees $n$ for which $\mathcal I_q(F,h)$ contains $k$-free polynomials. It appears difficult to channel such observations into a general statement similar to Theorems \ref{thm:n3} and \ref{thm:n4}, but it is possible to draw on them to gain some broad insights.

For example, Table \ref{table1} lists several pairs $(q,h)$ and the values of respective integers $n_0(q,h)$ such that the interval $\mathcal I_q(F,h)$ contains a squarefree polynomial whenever $\deg F \le n_0$. For values of $q$ with $p>2$, these bounds are computed using \eqref{eq:sqfreeintbd}, taking the largest value of $n$ such that the coefficient of $q^{h+1}$ is less than 1.  For those values with $p=2$, an explicit version of \eqref{eqn:N-bnd1} is used, after noting that in this specific case, $k=p=2$, the lower bound in \eqref{eq:habds} can be improved to $(h+1)/2-3/2$. This results in an expression identical to \eqref{eq:sqfreeintbd}, but in which the second to last term is half as large.  Note that even these values are likely much smaller than the ``truth.'' For example, in the case $q=2$ these methods do not prove that all short intervals with $h=1$ or 2 and any value of $n$ are guaranteed to contain squarefree polynomials, however direct computation shows that every such short interval contains a squarefree polynomial in these cases when $n\leq 9$ and 16 respectively.

In some cases, the bounds obtained using Theorems \ref{thm:n3} and \ref{thm:n4} are stronger than those obtained here.  Such improved bounds are included in the table above marked with the symbols * or \dag.  

\begin{table}
\begin{center}
\begin{tabular}{cccccccccccc}
\toprule
    $h$\ \textbackslash\  $q$          & 2 & 3   & 4   & 5 & 7 & 8 & 9 & 11 & 19  & 25 & 27 \\
\midrule
$h=1$       & --- & $3^*$ & $3^*$ & $3^*$ & $4^\dag$ & 11 & 4 & $4^\dag$ & {$4^\dag$} & {6} & 14 \\
$h=2$       & --- & $6^*$ & 12 & $6^*$ & $8^\dag$ & 89 & 20 & $8^\dag$ & $8^\dag$& 19 & 75  \\
$h=3$       & --- & $9^*$ &  57 & 9 & $12^\dag$ & 393 & 61 & $12^\dag$ & $12^\dag$ &  49 & 307 \\
$h=4$       & --- & $12^*$ & 174 & 17 & $16^\dag$ & 1467 & 164 & $16^\dag$ & $16^\dag$ &  118 & 1156 \\
$h=5$       & --- & 23 & 459 & 29 & 25 & 5092 & 414 & $20^\dag$ & $20^\dag$ &  271 & 4173 \\
$h=6$       & --- &  42 & 1124 & 48 & 39 & 16984 & 1013 & 28 & $24^\dag$ &  603 & 14629 \\
$h=7$       & --- & 73 & 2641 & 77 & 60 & 55234 & 2417 & 40 &  $28^\dag$ &  1314 & 50207 \\
$h=8$       & 23 & 123 & 6048 & 120 & 90 & 176448 & 5674 & 57 & 34 & 2818 & 169578 \\
\bottomrule
\end{tabular}
\bigskip
\caption{Values of $n_0(q,h)$ for which $\mathcal I_q(F, h)$ contains a squarefree polynomial whenever $\deg F \le n_0$.  Numbers marked with an * or a \dag\  were obtained using Theorems \ref{thm:n3} or \ref{thm:n4}, respectively.} 
\label{table1}    
\end{center}  
\end{table}

\end{remark}

\section{The differencing method for polynomials}
\label{sec:spacings}

Recall the set $\mathcal S_q(d)$ defined in \eqref{def:Td}. In this section, we prove several results about the spacing between elements of $\mathcal S_q(d)$. Through applications of Lemma \ref{lem2.3}, these results will then yield upper bounds on $|\mathcal S_q(d)|$, which apply to prove Theorems \ref{thm:n3} and \ref{thm:n4}. 

Our first result is a bound on the minimum degree of the difference of distinct elements of $\mathcal S_q(d)$. Recall that we use $p$ to denote the characteristic of the finite field $\mathbb F_q$. We note that when $p \nmid k$, we have $r = 1$ in the proposition below, while when $p \mid k$, we have $1 < r \le k$.  

\begin{proposition}\label{prop1}
Suppose that $h < d \le n/k$ and $G, H \in \mathcal S_q(d)$, with $G \neq H$. Let $r = r(k,p)$ be the least positive integer such that $p \nmid \binom kr$. When $r < k$, we have
\begin{equation}\label{eqn:5.1}
\deg(G-H) \geq \frac {(k+r)d-n}{r}.
\end{equation}
When $r = k$, we have either \eqref{eqn:5.1} with $r = k$, or 
\begin{equation}\label{eqn:5.3} 
\deg(G-H) \le \frac {h + kd - n}k. 
\end{equation}
\end{proposition}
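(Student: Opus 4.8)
The plan is to work directly with the defining property of $\mathcal{S}_q(d)$: if $G, H \in \mathcal{S}_q(d)$, then $G^k A, H^k B \in \mathcal{I}_q(F, h)$ for some $A, B \in \mathcal{M}_q$, with $\deg A = \deg B = n - kd$ and with the top coefficients of $G^kA$ and $H^kB$ both equal to $1$ (they are monic of degree $n$, agreeing with $F$). First I would record that $\deg(G^kA - H^kB) \le h$, since both lie in $\mathcal{I}_q(F,h)$. The strategy is then to factor $G^kA - H^kB$ in a way that forces a large power of $G - H$ (or at least a high-degree polynomial built from $G-H$) to appear, so that a degree count yields the claimed lower bound on $\deg(G-H)$.

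The key algebraic identity to exploit is the binomial-type expansion
\[
G^k - H^k = \sum_{r=1}^{k} \binom{k}{r} H^{k-r}(G-H)^r,
\]
so that modulo $(G-H)^r$ we have $G^k \equiv H^k + \binom{k}{1}H^{k-1}(G-H) + \dots + \binom{k}{r-1}H^{k-r+1}(G-H)^{r-1}$. With $r = r(k,p)$ the least index with $p \nmid \binom{k}{r}$, all terms with $1 \le j < r$ vanish mod $p$, so in fact $G^k \equiv H^k \pmod{(G-H)^r}$ — more precisely, $(G-H)^r \mid (G^k - H^k)$ is false in general, but $(G-H)^r$ divides $G^k - H^k$ up to the leading binomial term; the clean statement is that $G^k - H^k = \binom{k}{r}H^{k-r}(G-H)^r + (G-H)^{r+1}(\cdots)$, hence writing $e = \deg(G-H) < d$, one gets a handle on the $(G-H)$-adic valuation. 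I would then write $G^kA - H^kB = A(G^k - H^k) + H^k(A - B)$ and analyze the two pieces. The first piece is divisible by $(G-H)^r$ times a unit-adjacent factor $H^{k-r}$; combined with the degree bound $\le h$ on the whole difference, and using $\deg A = n - kd$, a valuation/degree argument should produce $re \ge (k+r)d - n$ after accounting for the contributions, i.e. \eqref{eqn:5.1}. The case $r = k$ is the degenerate one: the expansion $G^k - H^k$ has \emph{no} surviving low-order term, so $(G-H)^k$ genuinely divides $G^k - H^k$ only if one is more careful, and the dichotomy \eqref{eqn:5.1}-or-\eqref{eqn:5.3} reflects whether $A = B$ (forcing the $(G-H)^k$ divisibility and \eqref{eqn:5.1} with $r=k$) or $A \ne B$ (in which case $H^k(A-B)$ is a nonzero polynomial of degree $kd + \deg(A-B) \ge kd$, and since $G^kA - H^kB$ has degree $\le h$, the term $A(G^k - H^k)$ must cancel the top of $H^k(A-B)$, forcing $\deg(G^k - H^k) \ge kd - $ something, which translates via $\deg(G^k-H^k) = \deg(G-H) + (k-1)d$ — no wait, that needs the leading term not to cancel — into \eqref{eqn:5.3}).

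The main obstacle I anticipate is handling cancellation of leading coefficients carefully. The naive degree identity $\deg(G^k - H^k) = (k-1)d + \deg(G-H)$ can fail when $p \mid k$ because the leading terms of $G^k$ and $H^k$ (both $x^{kd}$) cancel \emph{and} further terms may cancel — precisely the phenomenon measured by $r(k,p)$. Likewise $\deg(G^k A - H^k B)$ could be much smaller than the sum of degrees. So I would need to track the $(G-H)$-adic valuation rather than naive degrees: let $v = \operatorname{ord}_{G-H}(G^kA - H^kB)$ and show on one hand $v \le \deg(G^kA - H^kB)/e \le h/e$ (trivially, since a polynomial of degree $\le h$ divisible by the $v$-th power of something of degree $e$ needs $ve \le h$, unless it's zero — and it can't be zero since $G^kA \ne H^kB$ as they'd be equal monic polynomials with distinct... actually one must check $G \ne H$ does force $G^kA \ne H^kB$), and on the other hand a lower bound on $v$ coming from the structure of the two summands. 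Establishing the lower bound on $v$ — in particular showing that the $H^k(A-B)$ piece, when nonzero, does not secretly kill the expected valuation, and splitting into the $A = B$ versus $A \ne B$ subcases — is where the real content lies, and it is exactly the point at which the case distinction $r < k$ versus $r = k$ in the statement becomes necessary.
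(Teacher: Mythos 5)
Your setup coincides with the paper's: the decomposition $G^kA-H^kB=(G^k-H^k)A+H^k(A-B)$, the expansion $G^k-H^k=\sum_{j\ge 1}\binom kj(G-H)^jH^{k-j}$, and the role of $r(k,p)$ are all the right ingredients. But the part you yourself flag as ``where the real content lies'' is left unresolved, and the way you sketch it is inverted and would not deliver the stated bounds. In the paper's argument the dichotomy goes the other way around: when $A\neq B$ one has $\deg\bigl(H^k(A-B)\bigr)\ge kd>h$, so the two summands must have \emph{equal} degree, and this yields the lower bound \eqref{eqn:5.1}; when $A=B$ the whole difference is $(G^k-H^k)A$ of degree $\le h$, which yields the upper bound \eqref{eqn:5.3} when $r=k$ and a contradiction ($\deg(G-H)<0$) when $r<k$ --- that contradiction is exactly what makes \eqref{eqn:5.1} unconditional for $r<k$, a step absent from your outline. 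You assign \eqref{eqn:5.1} to the case $A=B$ and \eqref{eqn:5.3} to the case $A\neq B$, which is backwards, and the ``$(G-H)^k$ divisibility'' you invoke for $A=B$ cannot produce a lower bound on $\deg(G-H)$ in any event.

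The missing idea that makes both cases a one-line degree count is the exact formula $\deg(G^k-H^k)=r\deg(G-H)+(k-r)d$: since $\deg(G-H)<d=\deg H$, the surviving terms $\binom kj(G-H)^jH^{k-j}$ (those with $j\ge r$ and $p\nmid\binom kj$) have degrees $j\deg(G-H)+(k-j)d$ strictly decreasing in $j$, so the top term is the $j=r$ term and there is no cancellation --- this disposes of precisely the worry you leave open (``that needs the leading term not to cancel''). By contrast, the $(G-H)$-adic valuation route you propose cannot replace this: in the case $A\neq B$ the term $H^k(A-B)$ need not be divisible by $G-H$ at all, so $\operatorname{ord}_{G-H}(G^kA-H^kB)$ may be $0$ and carries no information; and in the case $A=B$ divisibility alone only gives $k\deg(G-H)\le h$, which is strictly weaker than \eqref{eqn:5.3} because it discards the cofactor degree $n-kd\ge 0$ (note $(h+kd-n)/k\le h/k$). (A small side remark: $(G-H)^r\mid G^k-H^k$ is in fact true in characteristic $p$, contrary to your parenthetical; but divisibility is simply not the relevant tool here --- tracking exact degrees of both summands is.)
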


\begin{proof}
Let $A,B \in \mathcal M_q(n-kd)$ be such that $G^kA, H^kB \in \mathcal I_q(F,h)$. Then $\deg(G^kA - H^kB)\leq h$, and we deduce that
\begin{equation}\label{eqn:prop1.1} 
\deg\big((G^k-H^k)A+H^k(A-B)\big) \leq h.
\end{equation}
Note that
\[ G^k - H^k = \big((G-H)+H\big)^k -H^k= \sum_{j=1}^k \binom {k}{j}(G-H)^jH^{k-j}. \] 
Since $\deg(G-H) < d = \deg H$, it follows that
\begin{equation}\label{eqn:prop1.2} 
\deg(G^k - H^k) = r\deg(G-H) + (k-r)d. 
\end{equation}

Suppose first that $A \neq B$. Then the degree of the second term on the left side of  \eqref{eqn:prop1.1} is
\[ \deg( H^k(A-B) ) = kd + \deg(A - B) \geq kd > h. \]
This is only possible if the two terms on the left side of \eqref{eqn:prop1.1} have the same degree, meaning that
\begin{equation*}
    \deg((G^k - H^k)A) = \deg (H^k(A-B)) \geq kd. \label{eq:sqdifflowbd}
\end{equation*}   
Combining this with \eqref{eqn:prop1.2} gives
\begin{align*} 
kd &\le r\deg(G-H) + (k-r)d + \deg A = r\deg(G-H) + (n-rd), 
\end{align*}
which establishes \eqref{eqn:5.1} in this case. 

Next, we consider the case $A = B$. Then \eqref{eqn:prop1.1} and \eqref{eqn:prop1.2} give
\[ r\deg(G-H) + (n-rd) = \deg((G^k - H^k)A) \leq h, \]
and hence,
\[ \deg(G-H) \le \frac {h + rd - n}{r}. \]
When $r = k$, this establishes \eqref{eqn:5.3}, and when $r < k$, we get
\[ \deg(G-H) < \frac{(r+1)d - n}r \le 0, \]
which contradicts our assumption that $G \neq H$. Therefore, this case occurs only when $r = k$.
\end{proof}

We remark that when $r = k$ and $d < (n-h)/k$, inequality \eqref{eqn:5.3} contradicts the assumption $G \neq H$ of the proposition, so for $d$ in this range, we always have \eqref{eqn:5.1}. On the other hand, when $r=k$ and $(n-h)/k \le d \le n/k$, we can combine \eqref{eqn:5.1} and \eqref{eqn:5.3} to obtain a rather sharp bound on $|\mathcal S_q(d)|$, which we state in the following lemma.

\begin{lemma}\label{lm:p2-n3}
Assume the notation of Propositon \ref{prop1}. If $r=k$ and $(n-h)/k \le d \le n/k$, we have
\[ |\mathcal S_q(d)| \le q^{h/k+1}.  \]
\end{lemma}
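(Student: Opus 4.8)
The plan is to combine the two inequalities from Proposition \ref{prop1} with Lemma \ref{lem2.3}, choosing a threshold degree $\delta$ at which the two regimes meet. Concretely, when $r = k$, Proposition \ref{prop1} tells us that for $G \ne H$ in $\mathcal S_q(d)$, either $\deg(G-H) \ge \frac{(2k)d - n}{k} = 2d - n/k$ (inequality \eqref{eqn:5.1} with $r = k$), or $\deg(G-H) \le \frac{h + kd - n}{k}$ (inequality \eqref{eqn:5.3}). The key observation is that when $d$ lies in the range $(n-h)/k \le d \le n/k$, the upper bound in the second alternative, $\frac{h+kd-n}{k} = d - (n-h)/k$, is nonnegative and at most $h/k$, while the lower bound in the first alternative, $2d - n/k$, is at least $d - h/k$ (using $d \ge (n-h)/k$, so $n/k \le d + h/k$, hence $2d - n/k \ge d - h/k$). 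So the two alternatives are cleanly separated by the value $\delta := d - h/k$ (assuming this is positive; otherwise the bound is trivial since $|\mathcal S_q(d)| \le q^d \le q^{h/k} < q^{h/k+1}$).

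First I would verify the arithmetic that pins down $\delta$. Set $\delta = d - h/k$; I need $\delta \le d$ (clear, since $h/k \ge 0$) and $\delta > 0$, which holds precisely when $d > h/k$; since $d > h \ge h/k$ this is automatic. Then for any fixed $G \in \mathcal S_q(d)$, I claim there is \emph{at most one} $H \in \mathcal S_q(d)$ (namely $H = G$ itself) with $\deg(G - H) < \delta$. Indeed, if $H \ne G$ and $\deg(G-H) < \delta = d - h/k$, then the second alternative \eqref{eqn:5.3} forces $\deg(G-H) \le d - (n-h)/k$; but we also need to rule out the first alternative. If instead $\deg(G-H) \ge 2d - n/k$, then since $d \le n/k$ we'd have... actually I need $2d - n/k \ge \delta = d - h/k$, i.e. $d \ge n/k - h/k = (n-h)/k$, which is exactly our hypothesis. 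So $\deg(G-H) \ge \delta$ in the first alternative, contradicting $\deg(G-H) < \delta$. Hence only the second alternative can occur, but that gives $\deg(G-H) \le d - (n-h)/k \le d - 0 = d$, which is no contradiction on its own — so I must instead argue that the second alternative combined with $G \ne H$ is itself impossible unless $d \ge (n-h)/k$ allows it, and in that regime I should be checking whether \eqref{eqn:5.3} is compatible with $\deg(G-H) < \delta$. The cleaner route: \eqref{eqn:5.3} gives $\deg(G-H) \le (h+kd-n)/k$, and I want to conclude the count directly via Lemma \ref{lem2.3} with $\kappa = q^{(h+kd-n)/k + 1}$...

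Let me restructure. The natural approach is actually to apply Lemma \ref{lem2.3} with $\delta = 2d - n/k$ (the gap from \eqref{eqn:5.1}) and $\kappa$ bounded by the number of $H$ with $\deg(G-H) < \delta$; by Proposition \ref{prop1}, any such $H \ne G$ must satisfy \eqref{eqn:5.3}, i.e. lies in an interval $\mathcal I_q(G, \frac{h+kd-n}{k})$ of length $\lfloor (h+kd-n)/k \rfloor$, which contains at most $q^{(h+kd-n)/k + 1}$ polynomials of $\mathcal M_q(d)$ — but this overcounts badly. The right statement is: Lemma \ref{lem2.3} gives $|\mathcal S_q(d)| \le \kappa\, q^{d - \delta}$ with $\delta = 2d - n/k$ and $\kappa = q^{(h+kd-n)/k + 1}$ (an upper bound for the number of $H \in \mathcal S_q(d)$ within distance $\delta$ of a fixed $G$, since they all lie in the short interval from \eqref{eqn:5.3}). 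Then $|\mathcal S_q(d)| \le q^{(h+kd-n)/k + 1} \cdot q^{d - 2d + n/k} = q^{(h+kd-n)/k + 1 - d + n/k} = q^{h/k + 1}$, since $(h + kd - n)/k - d + n/k = h/k + d - n/k - d + n/k = h/k$. That is exactly the claimed bound. The main obstacle is purely bookkeeping: making sure the floor/ceiling issues in Lemma \ref{lem2.3} (which requires $\delta \in \mathbb R^+$ and the interval-length rounding) and in counting polynomials in $\mathcal I_q(G, (h+kd-n)/k)$ don't leak an extra factor of $q$ or break when $(h+kd-n)/k < 0$ — in the latter degenerate case \eqref{eqn:5.3} is vacuous, forcing \eqref{eqn:5.1} always, and one applies Lemma \ref{lem2.3} with $\kappa = 1$ to even better effect. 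I expect these edge cases to be the only real friction; the core computation is the one-line exponent identity above.
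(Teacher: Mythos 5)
Your final, restructured argument is exactly the paper's proof: take $\delta = (2kd-n)/k$ from \eqref{eqn:5.1}, note that by Proposition \ref{prop1} any $H$ with $\deg(G-H)<\delta$ must satisfy \eqref{eqn:5.3} and hence lies in $\mathcal I_q\bigl(G,(h+kd-n)/k\bigr)$, giving $\kappa = q^{(h+kd-n)/k+1}$, and then apply Lemma \ref{lem2.3} to get $\kappa q^{d-\delta}=q^{h/k+1}$. The initial detour with $\delta = d-h/k$ is abandoned and harmless, and the edge cases you flag do not arise under the lemma's hypotheses (since $d\ge (n-h)/k$ makes $(h+kd-n)/k\ge 0$, and $d>h$ together with $d\ge(n-h)/k$ forces $\delta>0$), so the proposal is correct and essentially identical to the paper's.
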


\begin{proof}
Let $\delta = (2kd-n)/k$. According to the proposition, any two elements $G, H$ of $\mathcal S_q(d)$ with $\deg(G-H) < \delta$ must satisfy \eqref{eqn:5.3}. In particular, for a fixed $G$, there are at most 
\[ |\mathcal I_q(G, (kd+h-n)/k)| \le q^{(kd+h-n)/k + 1} =: \kappa \]
polynomials $H \in \mathcal S_q(d)$ with $\deg(G-H) < \delta$. Thus, Lemma \ref{lem2.3} gives 
\[ |\mathcal S_q(d)| \le \kappa q^{d-\delta} = q^{h/k + 1}.  \qedhere \]
\end{proof}

Recall that in \S\ref{sec:prelim} we derived Theorem \ref{thm:n3} in the case when $p \nmid k$ from \eqref{eqn:5.1} with $r = 1$. We can use Lemma \ref{lm:p2-n3} to complete the proof of Theorem \ref{thm:n3} in the case when $p \mid k$.

\begin{proof}[Proof of Theorem \ref{thm:n3}: $p \mid k$]
To begin, we observe that when $d \le n/k$ and $r > 1$ in Proposition~\ref{prop1}, the bound \eqref{eqn:5.1} is stronger than its version with $r=1$. Therefore, when $1 < r < k$, we still have inequality \eqref{bd1} (and more), and so we may follow the proof from the case $p \nmid k$ (given in \S\ref{sec:2.1}) without any changes. Thus, we may focus on the case $r = k$.

Note that when $h \ge n/(k+1)$, Lemma \ref{lm:p2-n3} is applicable in the full range $h < d \le n/k$. Hence,
\begin{equation}\label{eq:n3-sig4} 
\sum_{h < d \le n/k} |\mathcal S_q(d)| \leq \left( \frac nk - h \right) q^{h/k+1} \leq (h/k) q^{h/k+1}.     
\end{equation}
Combining this with \eqref{eqn:bound} and \eqref{eqn2.3}--\eqref{useTd} with $\ell = h $, we find that
\begin{equation}\label{eq:n3-ch2}
\mathcal N_q(F,h) \leq q^{h+1} \left( \ln \zeta_q(k) + \frac {q+k_h-1}{q(q-1)h} + (h/k) q^{(1-k)h/k} \right).     
\end{equation}
When $k \ge 3$, the last expression is $< q^{h+1}$, provided for all $q \ge 3$ and $h \ge 1$. When $k=2$ (note that in this case, we have $p=2$ and $q = 2^f$), the same holds for $q \ge 4$ and $h \ge 1$. 
\end{proof}

Next, we consider $s$-tuples of distinct polynomials $\mathbf G = \{G_1, \dots, G_s \}$ in $\mathcal S_q(d)$, with $s \ge 3$. If $\mathbf G$ is such an $s$-tuple, we write
\begin{gather*}
\delta(\mathbf G) = \min_{1\leq i<j\leq s} \deg(G_i-G_j),\\
\Delta(\mathbf G) = \max_{1\leq i<j\leq s} \deg(G_i-G_j).
\end{gather*} 
By Proposition~\ref{prop1}, we have  
\begin{equation}\label{eqn:delta}
\delta(\mathbf G) \ge (k+1)d - n ,   
\end{equation}
whenever $h < d \le n/k$ and $r = r(k,p) < k$ (or $r = k$ and $d < (n-h)/k$). Our next result is a lower bound on $\Delta(\mathbf G)$ for triples. 

\begin{proposition}\label{prop:4d-n/3}
Suppose that $p \nmid k(k+1)$ and $h < d \le n/k$. If $\mathbf G = \{ G_1,G_2,G_3 \}$ is a set of distinct polynomials in $\mathcal S_q(d)$, then
\begin{equation} \label{eqn:DeltaG} \Delta(\mathbf G) \geq \frac{(k+2)d-n}{3}. 
\end{equation}
\end{proposition}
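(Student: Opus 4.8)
The plan is to carry out, over $\mathbb F_q[x]$, the three-point differencing argument of Halberstam and Roth. For $i=1,2,3$ fix $A_i\in\mathcal M_q(n-kd)$ with $B_i:=G_i^kA_i\in\mathcal I_q(F,h)$; since all three $B_i$ lie in the interval, $\deg(B_i-B_j)\le h$ for all $i,j$. The object to study is the polynomial identity obtained by clearing denominators in a $3\times3$ determinant: writing $A_i=B_i/G_i^k$ and multiplying the $i$th row by $G_i^k$,
\[ (G_1G_2G_3)^k\det\begin{pmatrix}1&G_1&A_1\\1&G_2&A_2\\1&G_3&A_3\end{pmatrix}=\det\begin{pmatrix}G_1^k&G_1^{k+1}&B_1\\G_2^k&G_2^{k+1}&B_2\\G_3^k&G_3^{k+1}&B_3\end{pmatrix}=:\mathcal D. \]
Write $D$ for the first determinant, so that $(G_1G_2G_3)^kD=\mathcal D$; the idea is that $\mathcal D$ can only be small if the $G_i$ are mutually close, and we will estimate its degree two ways.

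To compute $\deg\mathcal D$, perform the row operations $R_1\mapsto R_1-R_2$, $R_3\mapsto R_3-R_2$ and expand along the third column, whose entries become $B_1-B_2,\ B_2,\ B_3-B_2$. The cofactors of $B_1-B_2$ and $B_3-B_2$ are, up to sign, of the form $G_a^kG_b^k(G_a-G_b)$ with $\{a,b\}\subset\{1,2,3\}$, so those two contributions have degree at most $h+2kd+\Delta(\mathbf G)$. The cofactor of $B_2$, on the other hand, equals up to sign the generalized Vandermonde $\det\big(G_i^{\mu_j}\big)_{1\le i,j\le 3}$ with exponents $\mu=(0,k,k+1)$; this is divisible by the ordinary Vandermonde $\prod_{i<j}(G_j-G_i)$, and the quotient is a symmetric polynomial, homogeneous of degree $2k-2$, whose value at $G_1=G_2=G_3=1$ equals $\tfrac12k(k+1)$. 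Here the hypothesis enters: since $\chr(\mathbb F_q)\nmid k(k+1)$ (which forces $p$ odd), this value is nonzero in $\mathbb F_q$, so the cofactor of $B_2$ has degree exactly $(2k-2)d+\sum_{i<j}\deg(G_i-G_j)$ and the $B_2$-term has degree exactly $n+(2k-2)d+\sum_{i<j}\deg(G_i-G_j)$.

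It remains to show this term dominates the other two and to extract the bound. Write $m_{ij}=\deg(G_i-G_j)$; from $(G_1-G_2)+(G_2-G_3)=G_1-G_3$ one sees that at least two of $m_{12},m_{13},m_{23}$ equal $\Delta(\mathbf G)$, so $m_{12}+m_{13}+m_{23}\ge 2\Delta(\mathbf G)$. If the $B_2$-term failed to dominate, we would have $n+(2k-2)d+m_{12}+m_{13}+m_{23}\le h+2kd+\Delta(\mathbf G)$, hence $\Delta(\mathbf G)\le h+2d-n$; but $\Delta(\mathbf G)\ge\delta(\mathbf G)\ge(k+1)d-n$ by \eqref{eqn:delta} (applicable since $p\nmid k$ gives $r=1$), forcing $(k-1)d\le h$, which contradicts $h<d$. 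Hence $\mathcal D\ne 0$ and $\deg\mathcal D=n+(2k-2)d+m_{12}+m_{13}+m_{23}$; comparing with $\deg\mathcal D=3kd+\deg D$ and using $\deg D\ge 0$ gives $m_{12}+m_{13}+m_{23}\ge(k+2)d-n$, whence $3\Delta(\mathbf G)\ge m_{12}+m_{13}+m_{23}\ge(k+2)d-n$, which is \eqref{eqn:DeltaG}.

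The main obstacle is the exact degree of the $B_2$-cofactor: one must verify that the generalized Vandermonde determinant with exponents $0,k,k+1$ attains its full expected degree, and it is precisely $\chr(\mathbb F_q)\nmid k(k+1)$ that guarantees this (the leading coefficient degenerates when $p\mid k$ or $p\mid k+1$). The remaining ingredients—clearing denominators, the row reduction, the elementary fact that two of the $m_{ij}$ equal $\Delta(\mathbf G)$, and the comparison of degrees against the bound from \eqref{eqn:delta}—are routine bookkeeping; one should also note in passing that $d>h$ makes the $A_i$ uniquely determined, though this is not strictly needed.
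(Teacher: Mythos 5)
Your proof is correct and is essentially the paper's argument in determinant clothing: your $3\times 3$ determinant $\mathcal D=(G_1G_2G_3)^kD$ is exactly the second divided difference $\Phi[G_1,G_2,G_3]$ used in the paper (your $D$ coincides with the paper's polynomial $N$), and your generalized Vandermonde cofactor with leading coefficient $\tfrac12k(k+1)$ is the paper's factor $\sum_{a+b+c=2k-2}G_1^aG_2^bG_3^c$ with its $\binom{k+1}{2}$ monomials. The degree comparison, the use of $p\nmid k(k+1)$, the ``two of the three differences attain $\Delta$'' observation, and the appeal to \eqref{eqn:delta} all match the paper's proof step for step.
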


\begin{proof} 
For each $1\leq i \leq 3$, let $A_i\in\mathcal M_q(n-kd)$ and $R_i$ be polynomials such that 
\begin{equation} \label{eqn:5.5}
F = G_i^kA_i - R_i, \quad \deg R_i \le h. 
\end{equation}
We now consider the rational function 
\[ \Phi[G_1, G_2, G_3] =(G_3-G_2)\frac{F}{G_1^k}+(G_1-G_3)\frac{F}{G_2^k}+(G_2-G_1)\frac{F}{G_3^k}, \] 
essentially a second divided difference of the function $\Phi(t) = Ft^{-k}$ on $\mathbb F_q(x)$ (see \cite{IsKe94} for background on divided differences). By~\eqref{eqn:5.5}, we have
\begin{equation}\label{eqn:5.5a}
\Phi[G_1, G_2, G_3] = N-\Theta, 
\end{equation}
where 
\begin{align*} 
N &= (G_3-G_2)A_1 + (G_1-G_3)A_2+(G_2-G_1)A_3, \\
\Theta &=(G_3-G_2)\frac{R_1}{G_1^k}+(G_1-G_3)\frac{R_2}{G_2^k}+(G_2-G_1)\frac{R_3}{G_3^k}.
\end{align*}
Our immediate goal is to show that $N$ is a nonzero polynomial by showing that $\deg N \geq 0$. In the rest of the proof, we suppress the dependence on $\mathbf G$ and write simply $\Delta, \delta$, and $\Phi$ instead of $\Delta(\mathbf G), \delta(\mathbf G)$, and $\Phi[G_1,G_2,G_3]$. 

We can rewrite the definition of $\Phi$ as the polynomial identity
\begin{equation}\label{eqn:5.7}
\Phi G_1^kG_2^kG_3^k = F\left( \prod_{i < j}(G_j-G_i) \right) \left( \sum_{a+b+c = 2k-2} G_1^aG_2^bG_3^c \right),
\end{equation}
where the product on the right is over all pairs of indices $i,j$ with $1 \le i < j \le 3$, and the sum is over all triples $a,b,c$ with $0 \le a,b,c \le k-1$ and $a+b+c = 2k-2$. Observe that if both $\deg(G_j-G_i) < \Delta$ and $\deg(G_k - G_i) < \Delta$, then also
\[ \deg(G_j - G_k) \leq \max\{\deg(G_j - G_i), \deg(G_k - G_i) \} < \Delta, \]
which contradicts the choice of $\Delta$. Thus, at least two of the differences in the above product must have degree $\Delta$, and we get 
\begin{equation}\label{eqn:5.8}
2\Delta + \delta \leq \deg\left(\prod_{i < j}(G_j-G_i) \right) \leq 3\Delta.
\end{equation}
Also, the sum on the right side of \eqref{eqn:5.7} has $\binom {k+1}2$ terms, each of them in $\mathcal M_q((2k-2)d)$. Since $p \nmid \binom {k+1}2$, it follows that the sum is a polynomial of degree $(2k-2)d$, and we deduce 
\begin{equation}\label{dd-bd}
n +(2k-2)d + 2\Delta+ \delta \leq \deg(\Phi \, G_1^kG_2^kG_3^k)\leq n + (2k-2)d + 3\Delta.
\end{equation}

On the other hand, we have
\begin{equation}\label{eqn:5.11}
\begin{split}
\Theta G_1^kG_2^kG_3^k  = R_1(G_3-G_2)G_2^kG_3^k &+ R_2(G_1-G_3)G_1^kG_3^k + R_3(G_2-G_1)G_1^kG_2^k.
\end{split}
\end{equation}
Since each of the three terms on the right side of \eqref{eqn:5.11} has degree $\le h + \Delta + 2kd$, we obtain
\begin{equation}\label{theta-bd}
\deg \left(\Theta G_1^kG_2^kG_3^k \right) \leq 2kd + \Delta + h.
\end{equation}
Moreover, since $p \nmid k$, we have \eqref{eqn:delta} by Proposition \ref{prop1}. Combining \eqref{eqn:delta}, \eqref{dd-bd}, and \eqref{theta-bd}, we conclude that
\begin{align*} 
\deg(\Phi G_1^kG_2^kG_3^k) &\ge n + (2k-2)d + 2\Delta + \delta \\
&\ge (3k-1)d + 2\Delta \\
&> 2kd + h + 2\Delta \ge \deg \left(\Theta G_1^kG_2^kG_3^k \right).
\end{align*}
Thus, by \eqref{eqn:5.5a},
\[ \deg(N G_1^kG_2^kG_3^k) = \deg(\Phi G_1^kG_2^kG_3^k) \ge 0, \]
which establishes our prior claim that $N \neq 0$. Using the upper bound in \eqref{dd-bd}, we get
\[ 3kd \le \deg(N G_1^kG_2^kG_3^k) = \deg(\Phi G_1^kG_2^kG_3^k) \leq n+ (2k-2)d + 3\Delta, \]
and the desired conclusion follows.
\end{proof}

We now use Proposition \ref{prop:4d-n/3} and Lemma \ref{lm:p2-n3} to prove Theorem~\ref{thm:n4}. 

\begin{proof}[Proof of Theorem \ref{thm:n4}]
Suppose first that $p \nmid k(k+1)$. When $d > h$, we have $d \geq (n+1)/(k+2)$, and Proposition \ref{prop:4d-n/3} allows us to apply Lemma~\ref{lem2.3} with $\kappa = 2$ and $\delta = ((k+2)d-n)/3$ to deduce the bound
\begin{equation}
|\mathcal S_q(d)| \leq 2q^{(n-(k-1)d)/3}. \label{eq4.2}    
\end{equation} 
Therefore, 
\begin{align*}
\sum_{h < d \leq n/k} |\mathcal S_q(d)| &\leq 2\sum_{h < d \leq n/k} q^{(n-(k-1)d)/3} 
< 2q^{(n-(k-1)d_0)/3} \sum_{j \ge 0} q^{-(k-1)j/3} = \frac {2q^{(n+1)/(k+2)}}{q^{1/3}-q^{-(k-2)/3}}, 
\end{align*}
where $d_0 = (n+1)/(k+2)$.
This inequality, \eqref{eqn:bound}, and \eqref{eqn2.3}--\eqref{useTd} with $\ell = h$ now give
\begin{equation} 
\mathcal N_q(F,h) \leq q^{h+1} \left( \ln \zeta_q(k) + \frac {q+k_h-1}{q(q-1)h} + \frac {2q^{-(k+1)/(k+2)}}{q^{1/3}-q^{-(k-2)/3}}\right), \label{eq4.3}    
\end{equation}
which implies \eqref{eqn2.0} when $k=2$ and $q \ge 7$ or when $k \ge 3$ and $q \ge 5$.

Next, let $p \mid k$ and suppose that $r < k$ in Proposition \ref{prop1}. Then Proposition 1 yields  \eqref{eqn:5.1} with $r \ge 2$, and hence, with $r=2$. Thus, Lemma~\ref{lem2.3} with $\kappa = 1$ and $\delta = ((k+2)d-n)/2$ yields the bound
\begin{equation} 
|\mathcal S_q(d)| \le q^{(n-kd)/2}, \label{eq4.2a}    
\end{equation}
which supersedes \eqref{eq4.2}. Hence,
\eqref{eq4.3} holds also in this case.

Finally, let $p \mid k$ and $r = k$, and assume that $n/(k+2) \le h < n/(k+1)$. When $h < d < (n-h)/k$, we can again use Proposition \ref{prop1} to obtain \eqref{eq4.2a}. 
Hence,
\begin{equation}\label{eqn:n4-sig3}
\begin{split}
\sum_{h < d < (n-h)/k} |\mathcal S_q(d)| &\le q^{(n-kd_0)/2} \sum_{j \ge 0} q^{-kj/2} = \frac {q^{(n+1)/(k+2)}}{q^{1/2}-q^{-(k-1)/2}}.
\end{split}
\end{equation}
Moreover, when $(n-h)/k \le d \le n/k$, we may use Lemma \ref{lm:p2-n3} in a similar fashion to \eqref{eq:n3-sig4} to show that
\begin{equation}\label{eqn:n4-sig4}
\sum_{(n-h)/k \le d \le n/k} |\mathcal S_q(d)| \leq \left( \frac hk  + 1 \right) q^{h/k+1}.
\end{equation}
Combining \eqref{eqn:n4-sig3} and \eqref{eqn:n4-sig4} with \eqref{eqn:bound} and \eqref{eqn2.3}--\eqref{useTd} with $\ell = h$, we conclude that
\begin{equation}\label{eqn:n4-chk} \mathcal N_q(F,h) \leq q^{h+1} \left( \ln \zeta_q(k) + \frac {q+k_h-1}{q(q-1)h} + \frac {q^{-(k+1)/(k+2)}}{q^{1/2}-q^{-(k-1)/2}} + \left( \frac hk + 1 \right) q^{(1-k)h/k} \right), 
\end{equation}
which again implies \eqref{eqn2.0}.
\end{proof}

\appendix
\section{An analogue of the methods of Halberstam and Roth}

As in the proofs of Theorems \ref{thm:n3} and \ref{thm:n4}, we need to estimate
\[ \Sigma_3 = \sum_{\ell < d \leq n/k}|\mathcal S_{q}(d)|, \]
where $\ell \ge h$. When $k \ge 3$, the estimation of $\Sigma_3$ relies on the following proposition.

\begin{proposition}\label{prop4}
Let $k \ge 3$ and $p \nmid k \binom{2k-1}{k-1}$. If $n/(2k) \le h < d \le n/k$, we have 
\[ |\mathcal S_{q}(d) |\leq 2kq^{(n-d)/(2k-1)}.\]
\end{proposition}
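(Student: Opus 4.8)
The plan is to combine a spacing estimate for clusters of elements of $\mathcal S_q(d)$ with Lemma~\ref{lem2.3}. Writing $\delta = (2kd-n)/(2k-1)$ (so $0 < \delta \le d$, since $d > h \ge n/(2k)$ and $d \le n/k$), it suffices to prove that any $2k+1$ distinct polynomials in $\mathcal S_q(d)$ contain two, $G$ and $H$, with $\deg(G-H) \ge \delta$: Lemma~\ref{lem2.3} with $\kappa = 2k$ then gives $|\mathcal S_q(d)| \le 2k\,q^{d-\delta} = 2k\,q^{(n-d)/(2k-1)}$. It is worth noting at the outset that, since $p \nmid k$, Proposition~\ref{prop1} applies with $r=1$ and yields $\deg(G-H) \ge (k+1)d-n$ for all distinct $G, H \in \mathcal S_q(d)$; a short computation shows $(k+1)d-n \ge \delta$ precisely when $d \ge 2n/(2k+1)$, so the spacing estimate is automatic in the upper part of the range $h < d \le n/k$, and the substance of the proposition is confined to $h < d < 2n/(2k+1)$.

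For the spacing estimate I would extend the divided-difference argument of Proposition~\ref{prop:4d-n/3}, following the method of Halberstam and Roth \cite{HR51} as presented in \cite{FGT15}. Given a cluster $\mathbf G = \{G_1, \dots, G_{2k+1}\}$, write $F = G_i^kA_i - R_i$ with $A_i \in \mathcal M_q(n-kd)$ and $\deg R_i \le h$, and consider divided differences of the rational function $\Phi(t) = Ft^{-k}$ (cf. \cite{IsKe94}). The order-$k$ divided difference of $\Phi$ over any $k+1$ of the nodes, after clearing the denominators $G_i^k$, becomes a polynomial identity $N - \Theta = \pm F\,V\,S$, in which $N$ is a polynomial assembled from the $A_i$, $\Theta$ is a correction term whose degree is controlled by $h$ together with the quantities $\deg(G_i-G_j)$, $V$ is a product of differences $G_i - G_j$, and $S$ is the Schur polynomial $s_{((k-1)^k)}$ evaluated at those $G_i$, whose top-degree coefficient in $x$ equals $\binom{2k-1}{k-1}$. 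The hypothesis $p \nmid k\binom{2k-1}{k-1}$ is exactly calibrated to this: $p \nmid \binom{2k-1}{k-1}$ guarantees that $S$ has the expected $x$-degree $k(k-1)d$, while $p \nmid k$ supplies the Proposition~\ref{prop1} input used above. The Halberstam--Roth refinement then feeds the information from such divided differences, taken over the various $(k+1)$-subsets of the full cluster of $2k+1$ points, into a single degree count, exploiting the ultrametric structure of the quantities $\deg(G_i-G_j)$ (as in \eqref{eqn:5.8}) together with the pairwise lower bound $(k+1)d-n$; balancing all the contributions produces the lower bound $\Delta(\mathbf G) \ge (2kd-n)/(2k-1)$.

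I expect the technical heart, and the main obstacle, to be two-fold. First, one must verify that the relevant combination of divided differences does not vanish identically --- the analogue of showing $N \ne 0$ in Proposition~\ref{prop:4d-n/3}, which requires separately dispatching the degenerate configurations (e.g.\ when several of the $A_i$ coincide), just as Propositions~\ref{prop1} and~\ref{prop:4d-n/3} do. Second, and more delicately, one must organize the degree bookkeeping so that the denominator comes out as $2k-1$ rather than the weaker $\binom{k+1}{2}$ that a single order-$k$ divided difference over one $(k+1)$-tuple would give; this is precisely the point where working with all $2k+1$ points at once --- rather than a single $(k+1)$-tuple --- is essential, and where the combinatorial identification of the coefficient $\binom{2k-1}{k-1}$ does its work.
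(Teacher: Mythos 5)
Your reduction is exactly the one the paper uses: Lemma~\ref{lem2.3} with $\kappa = 2k$ and $\delta = (2kd-n)/(2k-1)$ gives $|\mathcal S_q(d)| \le 2k\,q^{d-\delta} = 2k\,q^{(n-d)/(2k-1)}$, so the entire content of the proposition is the cluster bound: any interval of length $< \delta$ meets $\mathcal S_q(d)$ in at most $2k$ polynomials. That cluster bound is precisely what your proposal does not establish. You outline a plan (order-$k$ divided differences of $Ft^{-k}$ over the $(k+1)$-subsets of a $(2k+1)$-point cluster, a Schur polynomial with top coefficient $\binom{2k-1}{k-1}$, then ``balancing all the contributions''), and you yourself flag as unresolved the two steps on which everything depends: the non-vanishing of the relevant combination, and the bookkeeping that would turn the denominator $\binom{k+1}{2}$ coming from a single order-$k$ divided difference into $2k-1$. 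Since $\binom{k+1}{2} > 2k-1$ for $k \ge 3$, a single divided difference over $k+1$ nodes provably gives a weaker spacing bound, and no mechanism is supplied for combining the subsets to do better. So the heart of the proof is missing, and the route you sketch is, as far as I can see, not the one that produces the exponent $2k-1$.

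The missing idea is not a higher-order divided difference but a Pad\'e-type identity (this is the actual Halberstam--Roth device). Write $(1-x)^{2k-1} = P_0(x) - x^kQ_0(x)$ with $\deg P_0 = \deg Q_0 = k-1$, homogenize to forms $P,Q$ of degree $k-1$ with $(x-y)^{2k-1} = x^kP(x,y) - y^kQ(x,y)$, multiply by $F$, and insert $F = G_i^kA_i - R_i$. If $\deg(G_1-G_2) < \delta$, a degree count forces the exact polynomial identity $P(G_1,G_2)A_2 - Q(G_1,G_2)A_1 = 0$. Fixing two distinct cluster members $G_1, G_2$ and chaining this identity through any third member $t$ yields $P(G_1,t)P(t,G_2)A_2 - Q(G_1,t)Q(t,G_2)A_1 = 0$, a polynomial equation of degree $2k-2$ in $t$ over $\mathbb F_q[x]$ whose leading coefficient is $(-1)^{k-1}\binom{2k-1}{k-1}(A_2-A_1)$; the hypothesis $p \nmid \binom{2k-1}{k-1}$, together with a short argument using $p \nmid k$ to show $A_1 \neq A_2$, makes this nonzero, so the cluster has at most $2 + (2k-2) = 2k$ elements. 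The $2k-1$ in the final exponent is the exponent in this identity, not the output of a balancing argument, and without this ingredient your sketch cannot be completed as written.
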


We postpone the proof of this result until the end of the section and focus first on the proof of Theorem \ref{thm:k}. By the proposition,
\begin{align*}
\Sigma_3 &\leq \sum_{\ell < d \le n/k} 2kq^{(n-d)/(2k-1)} < 2kq^{(n-\ell)/(2k-1)}\sum_{j \ge 0} q^{-j/(2k-1)}\\
&= \frac {2kq^{(n-\ell)/(2k-1)}}{1-q^{-1/(2k-1)}} \le  \frac {2kq^{h + (h-\ell)/(2k-1)}}{1-q^{-1/(2k-1)}},
\end{align*}
on recalling that $h \ge n/(2k)$. Writing $\delta_q = q^{-1/(2k-1)}$, we have
\[ 1 - \delta_q  > \frac {1 - \delta_q^{2k-1}}{2k - 1} = \frac {q-1}{(2k - 1)q}, \]
so
\begin{equation}\label{eqn:8d}
\Sigma_3 \leq \frac {2k(2k-1)q^{h+1}h^{(h-\ell)/(2k-1)}}{q - 1}.
\end{equation} 

Together, \eqref{eqn:bound}, \eqref{eqn2.3}--\eqref{useTd}, 
and \eqref{eqn:8d} give
\[ \mathcal N_{q}(F,h) < q^{h+1} \left( \ln\zeta_q(k) + \frac {(q+k)q^{\ell-h}}{q(q-1)h} + \frac {k(4k-2)q^{(h-\ell)/(2k-1)}}{q-1} \right). \]
We now select 
\[ \ell = h + \log_q (qh)^{(2k-1)/2k}. \]
This choice essentially balances the second and third terms on the right side of the last inequality and gives
\begin{align} \label{eqn:nk-final}
\mathcal N_{q}(F,h) < q^{h+1} \left( \ln\zeta_q(k) + \frac {(q+4k^2-k)(qh)^{-1/2k}}{q-1} \right).    
\end{align}
When $h$ is sufficiently large in terms of $k$, this completes the proof of the theorem. \\

All that remains is to prove Proposition \ref{prop4}.
\begin{proof}[Proof of Proposition \ref{prop4}]
Consider the polynomials $P_0,Q_0\in\mathbb Z[x]$ given by  
\begin{gather*}
    P_0(x)=1-\binom{2k-1}{1}x+\dots+(-1)^{k-1}\binom{2k-1}{k-1}x^{k-1},\\
    (1-x)^{2k-1}=P_0(x)-x^k Q_0(x).
\end{gather*}
We use these to define the degree-$(k-1)$ forms 
\[ P(x,y)=x^{k-1}  P_0\left( y/x \right), \quad Q(x,y)=x^{k-1}  Q_0\left( y/x \right), \]
which satisfy the algebraic identity
\[ (x-y)^{2k-1} = x^k P(x,y) - y^k Q(x,y). \]
In particular, for any polynomials $G_1, G_2 \in \mathbb F_q[x]$, we obtain
\begin{equation}\label{eqn:8.1} 
(G_1-G_2)^{2k-1} = G_1^k P(G_1,G_2) - G_2^k Q(G_1,G_2).
\end{equation}

Next, we consider \eqref{eqn:8.1} when $G_1, G_2 \in \mathcal S_{q,k}(d)$. We find polynomials  $A_i \in \mathcal M_q(n - kd)$ and $R_i$ with
\begin{equation}\label{eqn:k1} 
F = G_i^kA_i - R_i, \quad \deg R_i \le h. 
\end{equation}
We may then rearrange \eqref{eqn:8.1} as
\begin{align} \label{eqn:8.2}
(G_1-G_2)^{2k-1}F &= F \big( G_1^k P(G_1,G_2) - G_2^k Q(G_1,G_2) \big) = N + \Theta, 
\end{align}
where
\begin{align*}
N&= G_1^kG_2^k\big( P(G_1,G_2)A_2 - (G_1,G_2)A_1 \big), \\
\Theta&= G_2^kQ(G_1,G_2)R_1 - G_1^kP(G_1,G_2)R_2. 
\end{align*}
Note that
\[ \deg \Theta \le (2k-1)d + h < 2kd. \]
When
\begin{equation}\label{eqn:8.3} 
 \deg(G_1-G_2) < \frac {2kd - n}{2k-1} =: \Delta_k, 
\end{equation}
we find also that
\[ \deg\left( (G_1-G_2)^{2k-1}F \right) = (2k-1)\deg(G_1-G_2) + n < 2kd. \]
Thus, under condition \eqref{eqn:8.3}, we can deduce from \eqref{eqn:8.2} that
\[ \deg\big( G_1^kG_2^k (  P(G_1,G_2)A_2 -  Q(G_1,G_2)A_1 ) \big) < 2kd. \]
Since $\deg(G_1^kG_2^k) = 2kd$, this is possible only if 
\begin{equation}\label{eqn:k4}
 P(G_1,G_2)A_2 - Q(G_1,G_2)A_1 = 0. 
\end{equation}
That is, if $G_1,G_2 \in \mathcal S_{q}(d)$ satisfy \eqref{eqn:8.3}, then $G_1,G_2$, and the respective polynomials $A_1,A_2$ must satisfy the polynomial identity \eqref{eqn:k4}. 

Consider a third polynomial $G_3 \in \mathcal S_{q}(d)$ such that
\begin{equation}\label{eqn:8.5} 
 \deg(G_3-G_i) < \Delta_k
\end{equation}
holds for $i=1$. Then, as an immediate consequence of \eqref{eqn:8.3}, \eqref{eqn:8.5} holds also for $i=2$. Further, by the argument in the last paragraph, we have also
\begin{equation}\label{eqn:k5} 
 P(G_1,G_3)A_3 - Q(G_1,G_3)A_1 = 0, 
\end{equation}
and
\begin{equation}\label{eqn:k6} 
 P(G_3,G_2)A_2 - Q(G_3,G_2)A_3 = 0. 
\end{equation}
Finally, from \eqref{eqn:k5} and \eqref{eqn:k6}, we readily obtain that
\begin{equation}\label{eqn:k7}  
P(G_1,G_3)P(G_3,G_2)A_2 - Q(G_1,G_3)Q(G_3,G_2)A_1 = 0. 
\end{equation}

We now consider an interval $\mathcal I$ of length $\le \Delta_k$ and fix distinct polynomials $G_1, G_2 \in \mathcal S_{q}(d) \cap \mathcal I$. Then $G_1, G_2$ satisfy \eqref{eqn:8.3}, and any other polynomial $G_3 \in \mathcal S_{q}(d) \cap \mathcal I$ must satisfy \eqref{eqn:k7}. We view 
\begin{equation}\label{eqn:k8}  
P(G_1,t)P(t,G_2)A_2 - Q(G_1,t)Q(t,G_2) A_1 = 0
\end{equation}
as a polynomial equation in $t$ over $\mathbb F_q[x]$. By the construction of $P$ and $Q$, the left side of \eqref{eqn:k8} is a polynomial of degree $2k-2$ with leading coefficient
\[ (-1)^{k-1}\binom {2k-1}{k-1}(A_2 - A_1). \]
We will show that this coefficient is nonzero. The hypothesis on the characteristic $p$ reduces this task to showing that $A_1 \neq A_2$. 

When $A_1 = A_2 = A$, say, conditions \eqref{eqn:k1} yield
\[ \deg(G_1^k - G_2^k) + \deg A \le \deg(R_1 - R_2) \le h. \]
We have
\[ G_1^k - G_2^k = (G_1 - G_2)\sum_{j=0}^{k-1}G_1^jG_2^{k-j-1}. \]
The sum over $j$ is a polynomial of degree $(k-1)d$ with leading coefficient $k$, which does not vanish since $p \nmid k$. As $G_1 \neq G_2$, this implies that 
\[ (k-1)d \leq \deg(G_1^k - G_2^k) \leq h - \deg A < (k+1)d - n, \]
a contradiction. Therefore, $A_1 \ne A_2$.

Thus, \eqref{eqn:k8} is a (univariate) polynomial equation of degree $2k-2$ over $\mathbb F_q[x]$. The number of solutions of such an equation is bounded above by its degree, so once $G_1$, $G_2$ (and hence, $A_1$ and $A_2$) are fixed, there are at most $2k-2$ possibilities for $G_3 \in \mathcal S_{q}(d) \cap \mathcal I$. We conclude that
\[ |\mathcal S_{q}(d) \cap \mathcal I| \le 2k. \]
Therefore, the conclusion of the proposition follows from Lemma \ref{lem2.3} with $\kappa = 2k$ and $\delta = \Delta_k$.
\end{proof}

\subsection*{Acknowledgments} This work is the result of an REU project that took place on the campus of Towson University during the summer of 2022, with the financial support of the National Science Foundation under grants DMS-2136890 and DMS-2149865. The authors also acknowledge financial support from the Fisher College of Science and Mathematics and TU's Mathematics Department. Finally, the authors want to thank the anonymous referee for their thorough reading of the manuscript and for several improvements to the exposition.

\bibliographystyle{amsplain}
\bibliography{refs}

\end{document}